\numberwithin{equation}{section}
\renewcommand{\epsilon}{\varepsilon}
\newcommand{\M}{\mathcal{M}}
\newcommand{\C}{\mathbb{C}}
\newcommand{\E}{\mathcal{E}}
\renewcommand{\H}{\mathcal{H}}
\newcommand{\Q}{\mathbb{Q}}
\newcommand{\R}{\mathbb{R}}
\newcommand{\Z}{\mathbb{Z}}
\DeclareMathOperator{\ord}{ord}
\DeclareMathOperator{\eval}{Eval}
\newcommand{\twp}{{\tilde\wp}}
\newcommand{\tbf}[1]{\textbf{#1}}
\newtheorem{theorem}{Theorem}
\newtheorem{lemma}[theorem]{Lemma}
\newtheorem{corollary}[theorem]{Corollary}
\newtheorem{question}{Question}
\newtheorem{remark}[theorem]{Remark}
\newtheorem{notation}[theorem]{Notation}
\newenvironment{proof}{\par\noindent\tbf{Proof \hspace{0.1cm}}}{\vspace{0.2cm}\hfill
\rule{2mm}{2mm}\par}
\title{Hilbert's tenth problem for complex meromorphic functions in several variables}
\author{Thanases Pheidas and Xavier Vidaux\footnote{This work has been partially financed by the first author's European Marie Curie Individual Fellowship
MCFI-2002-00722 and Chilean Fondecyt projects 1060947, 1090233 and 1130134, by the Hausdorff Institute of Mathematics (2009 Haudorff Trimester Program on Diophantine Equations), by the Universidad de Concepci\'on, and by the University of Crete-Heraklion.}}
\begin{document}

\date{}

\maketitle

\begin{abstract}
We prove an analogue of Hilbert's Tenth Problem for complex meromorphic functions. More precisely, we prove that the set of integers is positive existentially definable in fields of complex meromorphic functions in several variables over the language of rings, together with constant symbols for two of the independent variables and the set of constants, a unary relation symbol for non-zero functions, and a unary relation symbol for evaluation at a fixed point (a place). We obtain a similar result for analytic functions, where the place appears in the language as a binary predicate. In both cases, we only require the functions to be meromorphic (or analytic) on a set containing $\C$ in one of the variables (it can be germs in all the other variables). 
\end{abstract}

Keywords: Hilbert's Tenth Problem, Diophantine equations\\
MSC2010: Primary: 03B25, Secondary: 32A10, 32A20

\tableofcontents

\section{Introduction}

Hilbert's Tenth Problem (H10) asked for an algorithm to decide whether or not an arbitrary system of polynomial equations over the integers has a solution in the integers. Based on work of M. Davis, H. Putnam and J. Robinson \cite{DPR61}, Matijasevich \cite{Matijasevich70} proved that such an algorithm does not exist: the positive existential theory of the ring of integers is undecidable.

In this work, we prove undecidability results for analogous problems for rings of complex meromorphic functions in several variables (at least $2$). Let $\bar z=(z_1,z_2)$. Let $\H_{\bar z}$ denote the ring of complex entire functions, and $\M_{\bar z}$ its field of fractions. In $\M_{\bar z}$, $\eval(f)$ stands for ``the function $f$ is well defined at $z_1=0$, and when evaluated at $z_1=0$, it is an analytic function of $z_2$ which takes the value $0$ at $z_2=0$'' --- note that it coincides with the usual concept of \emph{place}, as defined for instance in \cite[p. 349, and Example 4 p. 350]{LangAlgebra}. In $\H_{\bar z}$, $\eval_0(a,b)$ stands for ``$b\ne0$ and $\eval(a/b)$''. 

\begin{theorem}\label{mainLogicA}
For $\bar z=(z_1,z_2)$, the set $\Z$ of rational integers is positive-existentially definable in:
\begin{enumerate}
\item $\M_{\bar z}$, over the language of rings, together with a constant symbol for each variable, and with the unary predicate $\eval$. 
\item $\H_{\bar z}$, over the language of rings, together with a constant symbol for each variable, and with the binary predicate $\eval_0$.
\end{enumerate} 
Consequently, the positive-existential theory of each of these structures is undecidable.
\end{theorem}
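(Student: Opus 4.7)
The plan is to positive-existentially define $\Z$ in each of the two structures, after which undecidability of their positive-existential theories is immediate from the Davis--Putnam--Robinson--Matijasevi\v{c} theorem. The two parts of the statement are closely linked: I would handle (1) first, then derive (2) by writing each meromorphic function $f\in\M_{\bar z}$ as a quotient $a/b$ of entire functions, noting that $\eval(f)$ is precisely $\eval_0(a,b)$ (modulo the issue of common zeros at the evaluation point, which can be absorbed by introducing auxiliary existential variables), and translating every positive-existential formula over $\M_{\bar z}$ into one over $\H_{\bar z}$ whose variables range over pairs $(a,b)$ with $b\neq 0$ at the place.

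For the core definability of $\Z$ in $\M_{\bar z}$, I would follow the template pioneered by Denef for rational function fields and developed further for analytic settings: fix a lattice $\LL\subset\C$ and use the associated Weierstrass function $\PP$, together with the constant symbol $z_1$, to build a Diophantine ``clock''. Concretely, the idea is that $n\in\M_{\bar z}$ should be forced to be an integer by asserting the existence of companion functions $X,Y$ satisfying a Weierstrass equation $Y^2=4X^3-g_2X-g_3$, which one arranges to behave like $\PP(n\oo)$ and $\PP'(n\oo)$, together with positive-existential constraints tying $n$ to $z_1$ via $\eval$ (for instance, requiring that $X$ and $\PP(z_1\oo)$ agree in a sufficiently rich set of places). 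The integrality of $n$ is then forced by the discreteness of the period lattice.

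The main obstacle, and the feature that distinguishes the several-variable case from earlier one-variable results, is that the language does \emph{not} contain a derivation, an order function, or a first-order characterisation of the one-variable subfield, and the place $\eval$ is a single, fixed point-evaluation at $(0,0)$. One must amplify this meagre analytic information into enough structure to pin down the periodicity of $\PP$. The natural mechanism, and presumably the central technical step, is to exploit the constant symbol $z_1$ as a free parameter: for any positive-existentially definable constant $c$, the formula obtained by substituting $z_1-c$ for $z_1$ acts as an evaluation ``at the point $(c,0)$'', so that $\eval$ produces a continuum of places along the axis $\{z_2=0\}$. Combined with the hypothesis that the functions need only be meromorphic on a set containing $\C$ in the variable $z_1$, this should let one isolate a Diophantine image of $\M_{z_1}$ inside $\M_{\bar z}$ and then invoke (or refine) known one-variable techniques in the style of Denef--Lipshitz--Pheidas--Vidaux to conclude that $\Z$ is positive-existentially defined.
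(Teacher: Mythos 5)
There is a genuine gap at the heart of your plan. The proposed mechanism --- a nondegenerate Weierstrass equation $Y^2=4X^3-g_2X-g_3$ over a lattice, with integrality of $n$ ``forced by the discreteness of the period lattice'' --- does not work over fields of \emph{complex} meromorphic functions. Over $\C$, the set of $\M_{\bar z}$-points of such a curve is enormous: every point of $E(\C)$ gives a constant solution, the group of constant points is divisible, and $u\mapsto(\wp(g(u)),\wp'(g(u)))$ is a solution for an \emph{arbitrary} meromorphic $g$, not just for $g=n\cdot(\text{generator})$. So there is no Mordell--Weil rigidity to exploit, and no positive-existential condition involving a single point evaluation $\eval$ can separate the integer multiples of a point from the complex ones; the paper explicitly flags this obstruction (via Buzzard--Lu, Lemma 3.8) and states that its method is a deviation from all earlier elliptic-curve ``clock'' arguments. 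Your auxiliary device of substituting $z_1-c$ for $z_1$ to manufacture a continuum of places does not repair this, because the obstruction is the divisibility of $E(\C)$, not a shortage of places. The reduction of part (2) to part (1) by numerator/denominator pairs is fine and matches the paper.

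What the paper actually does is quite different. It works with the pencil $f(\delta,z)y^2=f(\delta,x)$, $f(\delta,z)=z^3+\delta z^2+z$, whose fibre at $\delta=-2$ degenerates to the nodal cubic $Y^2=z(z-1)^2$, uniformised by the $2\pi i$-periodic function $\twp(u)=\bigl((1+e^u)/(1-e^u)\bigr)^2$. The integer is extracted not from the group law but from the quantity $A_{xy}=x_z/y$ (which satisfies $\partial x_n/\partial z=ny_n$ on the multiples of $(z,1)$): one proves $A_{xy}$ is everywhere analytic, writes its restriction to $\delta=-2$ in the canonical form $\beta+\gamma(z-1)+\tilde f h_z+\tfrac12\tilde f_zh$, and shows that periodicity of the exponential uniformisation forces $\beta=A_{xy}|_{z=1,\delta=-2}\in\Z$. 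Since the language has no derivation, the derivative-based quantity is replaced by the purely algebraic surrogate $\alpha_{xy}=(x-1)/\bigl((z-1)y\bigr)$, whose value at $(1,-2)$ agrees with $A_{xy}$ up to the factor $\ord_{z-1}(x-1)$ and is accessed through the single predicate $\eval$. None of these ingredients (the degenerate fibre, the analyticity of $x_z/y$, the representation lemma, the algebraic surrogate for the derivative) appears in your sketch, and without them the integrality step fails.
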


In algorithmic terms, this means that there is no algorithm to decide whether an arbitrary system of polynomial equations with coefficients in $\Z[z_1,z_2]$, together with conditions of the form $\eval(f)$ (resp. $\eval_0(a,b)$) on some of the variables, has or does not have a solution in $\M_{\bar z}$ (resp. $\H_{\bar z}$). 

Problems of this kind occur often in applications of Mathematics, especially in the form of asking whether algebraic conditions that were obtained as a result of solving a differential equations with ``initial conditions'' (such as the ones expressed by the predicate $\eval$) represent global analytic or meromorphic functions or not.

Theorem \ref{mainLogicA} is actually a particular case of our main result, Theorem \ref{mainLogic}, below. Before stating this result, let us first discuss the existing literature on decidability problems related to holomorphic and meromorphic functions. 

In 1978, J. Denef \cite{Denef78,Denef79} gave the first H10-like results for rings of functions. In particular, he proved that for any domain $R$, H10 for rings of polynomials $R[z]$ over the language $L_z=L_r\cup\{z\}$, where $L_r=\{=,0,1,+,\cdot\}$ denotes the language of rings, is unsolvable (see the survey \cite[Section 1.2, Polynomial Rings]{PheidasZahidi00} for a discussion on why one needs to enrich the language of rings in order to obtain non trivial analogues of H10 for rings of functions). This means that there is no algorithm to decide whether or not an arbitrary system of polynomial equations over $\Z[z]$ has a solution over $R[z]$. 

Denef \cite{Denef78} also proved the unsolvability of H10 for some fields of rational functions $F(z)$, e.g. for $\R(z)$. Since then many more similar results have been produced (see for example \cite{Pheidas91,Videla94,Zahidi03}), all claiming undecidability for fields of rational functions, but a general theorem for {\it any} field of rational functions is still missing (though expected  by the experts to be true). In particular it is unknown whether the positive existential theory (or, even the full first order theory) of the field of rational functions $\C (z)$ of the variable $z$ over the complex numbers is decidable or undecidable (over the language $L_z$). A main obstacle to proving this has been that there is no known diophantine definition (or even a first order one) of \emph{order}: the property for a rational function $x\in \C(z)$ to take the value $0$ at $z=0$. The problem of defining the order was one of the main issue to solve H10 for rational functions over a finite field \cite{Pheidas91}. Another exception is the fact, proven in \cite{KimRoush92}, that H10 for $\C(z_1,z_2)$ is unsolvable over the language $L_{z_1,z_2}=L_r\cup\{z_1,z_2\}$. 
 
Though an impressive literature exists about \emph{algebraic extensions} of polynomial rings and rational function fields (see \cite{Shlapentokh07} and the references therein), little is known about subrings of \emph{completions} of these structures. Indeed, the only structures of analytic or meromorphic functions for which H10 is known to be unsolvable are the following:

\begin{enumerate}
\item The ring of entire functions over a non-Archimedean complete algebraically closed field of characteristic $0$, seen as an $L_z$-structure --- due to L. Lipshitz and the first author, \cite{LipshitzPheidas95}.
\item The ring of entire functions over a non-Archimedean complete algebraically closed field of positive characteristic, seen as an $L_z$-structure --- due to N. Garcia-Fritz and H. Pasten, see \cite{GarciaFritzPasten15}. 
\item The field of functions which are meromorphic over a non-Archimedean complete algebraically closed field of characteristic $0$, seen as an $L_{z,\ord}$-structure, where $\ord$ is the set of functions that take the value $0$ at $0$ --- due to the second author, see \cite{Vidaux03}.
\item The field of functions which are meromorphic over a complete algebraically closed field of odd characteristic, seen as an $L_{z,\ord}$-structure, where $\ord$ is the set of functions that take the value $0$ at $0$ --- due to H. Pasten, see \cite{Pasten16}.
\end{enumerate}
Note that all these results are for \emph{non-Archimedean} structures. 

For relevant questions and results, see also \cite{DenefGromov85,Rubel95,PheidasZahidi08}. We should emphasize that H10 for the ring $\H_z$ of complex entire functions in the variable $z$ is not known to be solvable or not over the language $L_z$ (the reference \cite{Pheidas95}, where a result is claimed, has a mistake which is discussed in \cite[Section 8]{PheidasZahidi00}).

As far as the full theory is concerned, R. Robinson \cite{RRobinson51} proved that the first order theory of the ring $\H_z$ is undecidable over $L_r$ (and hence also for rings $\H_{\bar z}$, where $\bar z$ is a tuple of variables), and recently H. Pasten \cite{Pasten16} proved the analogous result for fields of meromorphic functions in positive characteristic.

Before we state our main theorem, we need to introduce some notation. Given $m\ge2$ an integer and a non-empty connected subset  $B$ of $\C^m$, we will denote by $\M_{\bar z} (B)$ the field of meromorphic functions on an open superset of $B$ in the variables $\bar z=(z_1,\dots,z_m)$. We will see $\M_{\bar z} (B)$ as an $L_{z_1,z_2,\eval,C}$-structure, where 
$$
L_{z_1,z_2,\eval,C}=L_r\cup\{z_1,z_2,\eval,C\}
$$ 
and 
\begin{itemize}
\item $\eval$ is the set of functions $g$ which, when evaluated at $z_1=0$, are analytic functions in $z_2$ which take the value $0$ at $z_2=0$; and
\item $C$ is the set of constant functions.
\end{itemize} 
If $R$ is a subring of $\M_{\bar z} (B)$, we will feel free to write $\eval(a,b)$ to mean ``$b\ne0$ and $\eval(a/b)$'' and we will consider $R$ as an $L_{z_1,z_2,\eval,C,\ne}$-structure, where $L_{z_1,z_2,\eval,C,\ne}=L_{z_1,z_2,\eval,C}\cup\{\ne\}$, and $\ne$ stands for the set of non-zero functions. If $B=\C^m$, then $R$ will be considered as an $L_{z_1,z_2,\eval,\ne}$-structure, where $L_{z_1,z_2,\eval,\ne}=L_r\cup\{z_1,z_2,\eval,\ne\}$. Finally, for any set $B$ as above, we will write $\H_{\bar z}(B)$ for the subring of all analytic functions in $\M_{\bar z}(B)$. 

We can now state our main theorem. 

\begin{theorem}\label{mainLogic}
Let $m\ge 2$ be an integer. Let $B$ be a connected subset of $\C^m$ which contains $\C\times\{0\}\times D_3\dots\times D_m$, where each $D_i$ is a non-empty connected subset of $\C$. Let $R$ be a subring of $\M_{\bar z} (B)$ containing $\C [z_1,z_2]$. 
\begin{enumerate}
\item The set $\Z$ of rational integers is positive-existentially definable in $R$, seen as an $L_{z_1,z_2,\eval,C,\ne}$-structure. Consequently, the positive-existential $L_{z_1,z_2,\eval,C,\ne}$-theory of $R$ is undecidable.
\item Assume $B=\C^m$. The set $\Z$ of rational integers is positive-existentially definable in $R$, considered as an $L_{z_1,z_2,\eval,\ne}$-structure. Consequently, the  positive-existential $L_{z_1,z_2,\eval,\ne}$-theory of $R$ is undecidable.
\end{enumerate} 
\end{theorem}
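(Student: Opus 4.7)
I will follow the Denef--Matiyasevich strategy, encoding $\Z$ via a Pell-like equation over $R$ and using $\eval$ to extract the integer parameter. Consider
\[
x^{2}-z_{1}(z_{1}+2)\,y^{2}=1.
\]
Its solutions in $\C[z_{1}]$ are exactly the pairs $(\pm T_{n}(1+z_{1}),\,\pm U_{n-1}(1+z_{1}))$ for $n\in\Z$, where $T_{n},U_{n}$ are the Chebyshev polynomials of the first and second kind. The first key step is to classify all $R$-solutions: any $(x,y)\in R^{2}$ satisfying this equation must be of Chebyshev form for some constant integer $n$. I would restrict $(x,y)$ to a slice $\C\times\{0\}\times\{(a_{3},\dots,a_{m})\}$ with $a_{j}\in D_{j}$, which lies in $B$ by hypothesis, obtaining a meromorphic Pell solution in $z_{1}\in\C$. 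The function $x+y\sqrt{z_{1}(z_{1}+2)}$ is then a nowhere-vanishing meromorphic function on the branched double cover of $\C$ ramified at $\{0,-2\}$; monodromy around these branch points forces its logarithmic exponent to be integral, giving the Chebyshev classification. Continuity on the connected set $D_{3}\times\cdots\times D_{m}$ makes the index $n$ a fixed integer, and a meromorphic identity argument transports the equalities $x=T_{n}(1+z_{1})$, $y=U_{n-1}(1+z_{1})$ to all of $B$.

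Since $U_{n-1}(1)=n$, the predicate $\eval(y-w)$ applied to such a Pell solution forces $w(0,0,\dots,0)=n$. For part~$(1)$, where $C$ is in the language, I would use the formula
\[
\phi(w)\;:=\;w\in C\;\wedge\;\exists x,y\in R\,\bigl(x^{2}-z_{1}(z_{1}+2)y^{2}=1\;\wedge\;\eval(y-w)\bigr),
\]
whose solution set in $R$ is exactly $\N\cup\{0\}$, viewed inside $C$. Using $\Z=(\N\cup\{0\})-(\N\cup\{0\})$, this gives a positive-existential definition of $\Z$, and Matiyasevich's theorem yields undecidability of the positive-existential theory of $R$.

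For part~$(2)$, with $B=\C^{m}$ and $\ne$ in the language but no predicate for constants, I would strengthen the formula so that $w$ is forced to be the constant integer $n$ rather than merely taking the value $n$ at the origin. The plan is to add positive-existential divisibility conditions such as $y-w=z_{1}h$ for some $h\in R$, which force $w\equiv n$ on the hypersurface $\{z_{1}=0\}$; to combine this with a symmetric Pell-type constraint rigidifying $w$ on $\{z_{2}=0\}$; and to iterate using polynomial translates of the Pell equation (e.g.\ replacing $z_{1}$ by $z_{1}-a$ for various $a\in\C\subset R$) together with the $\ne$ predicate to control the result on a sufficiently rich constructible subset of $\C^{m}$. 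Global meromorphicity on $\C^{m}$ will then force $w\equiv n$.

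The main obstacle I expect is in part~$(2)$: implementing this rigidification with only finitely many existential quantifiers and without a direct predicate for constants, while ensuring that $\ne$ really does suffice to kill non-constant parasitic solutions. A secondary technical point is the classification of $R$-solutions of the Pell equation, where one must exclude exotic meromorphic solutions depending nontrivially on $z_{3},\dots,z_{m}$ or branching along unexpected divisors, relying on the hypothesis on $B$ together with monodromy on the double cover.
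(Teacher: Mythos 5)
Your proposal has a fatal gap at its first key step: the classification of Pell solutions is false over rings of meromorphic (or even entire) functions, and this is precisely the obstruction that the paper is built to circumvent. Let $s$ be a square root of $z_{1}(z_{1}+2)$ and take any polynomial $P\in\C[z_{1}]$. Then
$$
x=\cosh\bigl(sP(z_{1})\bigr)=\sum_{k\ge0}\frac{(z_{1}(z_{1}+2))^{k}P^{2k}}{(2k)!},
\qquad
y=\frac{\sinh\bigl(sP(z_{1})\bigr)}{s}=\sum_{k\ge0}\frac{(z_{1}(z_{1}+2))^{k}P^{2k+1}}{(2k+1)!}
$$
are entire in $z_{1}$ (only even powers of $s$ occur) and satisfy $x^{2}-z_{1}(z_{1}+2)y^{2}=\cosh^{2}-\sinh^{2}=1$, yet they are not Chebyshev. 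The monodromy argument only pins down the winding number of the unit $x+ys$ on the punctured parameter curve; it does not exclude the factor $e^{h}$ with $h$ an arbitrary analytic function satisfying the evenness constraint, which is where these exotic solutions come from. Worse, $\eval$ does not rescue you: in the example above $y(0,0,\dots)=P(0)$ is an \emph{arbitrary} complex number, so your formula $\phi(w)$ defines all of $C$, not $\N$. (This is essentially why H10 for $\H_{z}$ over $L_{z}$ remains open, as the paper's introduction discusses.) Your part (2) is also only a sketch of intent, but it is moot given the failure of part (1).

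For contrast, the paper never classifies solutions: it accepts that the solution set of its Manin--Denef equation $(z^{3}+\delta z^{2}+z)y^{2}=x^{3}+\delta x^{2}+x$ is uncountable, and instead proves that the \emph{derivative-based} quantity $A_{xy}=x_{z}/y$ (not $y$ itself) is everywhere analytic and that its value at the node $(z,\delta)=(1,-2)$ of the degenerate fibre $y^{2}=z(z-1)^{2}$ is always a rational integer. That integrality comes from a representation lemma writing $A_{xy}|_{\delta=-2}$ as $\beta+\gamma(z-1)+\tilde fh_{z}+\tfrac12\tilde f_{z}h$ and a uniformisation of the nodal cubic by the $2\pi i$-periodic function $\twp(u)=\bigl((1+e^{u})/(1-e^{u})\bigr)^{2}$, which forces $\beta\in\Z$ by periodicity; the multiples $n(z,1)$ on the elliptic fibres then realize the integers. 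If you want to salvage a Pell-type approach you would have to work with $x_{z_{1}}/y$ (which equals $n$ for Chebyshev solutions since $T_{n}'=nU_{n-1}$) and redo the representation and periodicity analysis on the double cover, which essentially reconstructs the paper's argument in a degenerate-conic disguise.
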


Observe that the symbol $\ne$ can be easily removed for $R$ equal to either $\M_{\bar z}(B)$ or $\H_{\bar z}(B)$.

We view this theorem as evidence for the unsolvability of H10 for the ring of analytic functions in one variable $z$ over the language $L_z$, as all but one of the $D_i$ may be just singletons and, in the one variable case, the predicate $\eval$ corresponds to the classical valuation, which is easily definable in a positive existential way (recall that the ring of germs of analytic functions in one variable has a decidable theory --- see \cite{Kochen75}). This is the first result in the bibliography which proves undecidability for a diophantine problem in a ring of complex analytic functions over a mild extension of the ring language. 

H. Pasten pointed out to us Lemma 3.8 in \cite{BuzzardLu00}, which indicates that the usual way to get the integers from endomorphisms of elliptic curves should not work in the setting of complex meromorphic functions in one variable. Our method of proof is indeed a deviation from all proofs of similar results in the past, and goes as follows: we produce a rational function, defined on a concrete variety, which, although it can take up an uncountable number of values, has the property that, at some pre-determined point, whenever defined (in the sense of the predicate $\eval$), takes on values that are rational integers. And we show that all rational integers are obtainable in this way.

Here are some obvious questions that result from our work: 

\begin{question} Is $\eval$ positive-existentially definable over $\H_{\bar z}(\C^m)$, seen as an $L_{z_1,z_2}$-structure?
\end{question}

\begin{question} Is $\eval$ positive-existentially definable over $\M_{\bar z}(\C^m)$, seen as an $L_{z_1,z_2,\ord}$-structure, where $\ord(x)$ stands for ``the meromorphic function $x$ is analytic at $\bar z=(0,\dots,0)$ and takes the value $0$ at this point''?
\end{question}

We are in debt to Christos Kourouniotis, who helped us on an early version of this work. During the 15 years or so that we have been working on this project, we have had helpful discussions on some technical issues with Gustavo Avello J., Jan Denef, Antonio Laface, Leonard Lipshitz, Pavao Mardesic, Hector Pasten and Joseph H. Silverman. We are very grateful to each of them.

\section{Sketch of proof and Notation}\label{mainIdea}

Whenever $g$ is a function of the variables $(z_1,\dots,z_m)$, we will write $g_{z_i}$ for the partial derivative of $g$ with respect to $z_i$. We will work over $\M=\M_{z,\delta}(B)$, in the pair of independent variables $(z,\delta)$, where $B$ is a connected superset of $\C\times\{-2\}$. All equality symbols are interpreted over $\M$ (are functional equalities), except if stated otherwise. We will denote by $\H$ the ring of analytic functions in $\M$. 
   
Write $f(\delta,z)=z^3+\delta z^2+z$. Consider a solution $(x,y)\in \M^2$ of the ``Manin-Denef Equation'' $f(\delta,z)y^2=f(\delta,x)$, namely,
\begin{equation}\tag{MD}\label{MD}
(z^3+\delta z^2+z) y^2=x^3+\delta x^2+x.
\end{equation}

When $y$ is not the zero function, we will write 
$$
A_{xy}=\frac{x_z}{y}
\qquad\textrm{and}\qquad
\alpha_{xy}=\frac{x-1}{(z-1)y}
$$
and consider these expressions as elements of $\M$. Also we will write 
$$
\tilde A_{xy}=\left . A_{xy}\right|_{\delta=-2}
$$
and consider it as a meromorphic function in the variable $z$. 

First we will prove that $A_{xy}$ is analytic on $B$ (see Lemma \ref{xavier1}), so in particular $\tilde A_{xy}$ is a well defined analytic function on $\C$. In general $\alpha_{xy}$ may be  not continuous at $(z,\delta)=(1,-2)$, that is, $\alpha_{xy}$ evaluated at $z=1$ and then at $\delta =-2$, may either be undefined or may have a different value if evaluated in the reverse order. Still, using the fact that $A_{xy}$ is analytic, we will show that, whenever $\left . \alpha_{xy}\right|_{z=1,\delta=-2}$ is defined (and this happens often enough for our purposes), we have 
$$
\left . \ell\cdot\alpha_{xy}\right|_{z=1,\delta=-2}=\left . A_{xy}\right|_{z=1,\delta=-2}
$$ 
where $\ell\in\{1,-2\}$ (see Lemma \ref{alphaA}). 

In Section \ref{secrep} (Lemma \ref{central}) we prove that there are unique constants $\beta$ and $\gamma$, and a unique analytic function $h$ in the variable $z$, such that
\begin{equation}\label{diff}
\tilde A_{xy}=\beta +\gamma (z-1)+ \tilde fh_z+\frac{1}{2}\tilde f_z h,
\end{equation} 
where $\tilde f(z)=f(-2,z)=z(z-1)^2$. In particular, we have 
$$
\left . \tilde A_{xy}\right|_{z=1}=\beta.
$$ 
Our next task is to show that the constant $\beta$ is a rational integer (Lemma \ref{intvallem}). We do this in Sections \ref{secuni} and \ref{intval} in a way that we will now describe. 

We make the following uniformisation (topological) argument. First we observe that there is a periodic function $\tilde \wp (u)$ of the variable $u$, of period $2\pi i$, which is meromorphic in $u$, such that
$$
(\tilde \wp' )^2= \twp\cdot (\twp-1)^2=\tilde f\circ \tilde \wp
$$
(derivative with respect to $u$). We also find a meromorphic function $\tilde \xi (u)$ of $u$ with the property $\tilde \xi' (u)=\tilde \wp(u) -1$. 
  
We set 
\begin{equation}\label{uni2}
G(u) =\beta u+\gamma \tilde \xi (u)+\tilde \wp '(u) \cdot h(\tilde \wp (u))
\end{equation}
where $\beta$, $\gamma$ and $h$ are defined by Equation \eqref{diff}. Set $\tilde x=\left . x\right|_{\delta =-2}$ and 
$\tilde y=\left. x\right|_{\delta =-2}$, whenever it makes sense. Setting $z=\tilde \wp (u)$, we have $G'(u)=\tilde A_{xy}$. Using this fact, we compare the pairs of functions 
$(\tilde x(\tilde \wp (u)), \tilde \wp '(u)\cdot \tilde y (\tilde \wp (u)))$
and 
$(\tilde \wp (G), \tilde \wp' (G))$ and we establish that they are \emph{essentially} equal locally, hence globally --- see Lemma \ref{uniform}. In order to conclude, write
\begin{equation}\label{uni3}
\tilde x(z)=\tilde \wp (G) 
\qquad \textrm{and }\qquad
\tilde \wp'(u)\cdot \tilde y(z) =\tilde \wp '(G).
\end{equation}  
Because $\tilde x$ and $\tilde y$ are functions of $z$, the functions $\tilde \wp (G(u))$ and $\tilde \wp' (G(u))$ must be invariant under the transformation $u\mapsto u+2\pi i$. This, together with the particular form of the function $\tilde \wp$,  implies that the constant $\beta$ is a rational integer. We conclude that $\left. \tilde A_{xy}\right|_{z=1}$ is a rational integer, hence $\left. 2\alpha_{xy}\right|_{z=1,\delta=-2}$, whenever defined, is a rational integer.

Finally, in Section \ref{obtintegers}, we show that the quantities $\left. \alpha_{xy}\right|_{z=1,\delta=-2}$ are defined for some particular rational solutions over $\C (\delta ,z)$. The pair $(x,y)=(z,1)$ is a solution of Equation $\eqref{MD}$, which, seen as an equation over the field $\C (\delta )$, defines an elliptic curve. For $n\in \Z$ write $(x_n,y_n)=n(z,1)$, where addition is meant on the elliptic curve. We then show that for $n$ odd and $(x,y)=(x_n,y_n)$ the quantity $\left. \alpha_{xy}\right|_{z=1,\delta=-2}$ is defined and we have
$$
\left. \alpha_{x_ny_n}\right|_{z=1,\delta =-2}=\left. A_{x_ny_n}\right|_{z=1,\delta =-2}=n.
$$ 

Theorem \ref{mainLogic} will follow by setting $z_1=z-1$ and $z_2=\delta +2$. Indeed, the above allows, by the use of the predicate $\eval$, to define in a positive existential way the set of integers --- see Section \ref{Logic}. The undecidability results of Theorem \ref{mainLogic} follow by known  methods.

\section{The functions $x_zy^{-1}$ and $\frac{x-1}{(z-1)y}$}\label{lemmas}

\subsection{Analyticity of $x_zy^{-1}$}\label{analyticA}

In this section we show that for any solution $(x,y)$ of Equation \eqref{MD} over $\M$, the function $\frac{x_z}{y}$ is analytic, i.e. lies in $\H$. We start by recalling a well-known fact. 

\begin{theorem}\label{ufd} If $k$ is a field, the formal power series ring $K=k[[X_1,\dots,X_m]]$ is a unique factorization domain. If $k$ is a subfield of $\C$, then the subfield $k\{X_1,\dots,X_m\}$ of $K$, consisting  of the elements of $K$ which are germs of functions analytic at $(0,\dots,0)\in \C^m$, is a unique factorization domain.
\end{theorem}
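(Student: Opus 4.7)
The plan is to prove both statements by induction on $m$, using the Weierstrass preparation theorem as the essential tool at each inductive step. Since the paper labels this a recollection of a well-known fact, I would focus on the overall strategy and flag the compatibility statements that deserve genuine care, rather than re-deriving the preparation theorem itself.

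For the formal case, I would start with the base $m=1$: the ring $k[[X_1]]$ is a discrete valuation ring (with valuation given by the order of vanishing at $0$), hence a PID and therefore a UFD. For the inductive step, with $m\ge 2$, assume $k[[X_1,\dots,X_{m-1}]]$ is a UFD. Given a nonzero $f\in k[[X_1,\dots,X_m]]$, apply an invertible linear change of variables (which is an automorphism of the ring, so preserves UFD status) to arrange that $f$ is $X_m$-regular of some order $d$; i.e. $f(0,\dots,0,X_m)$ is a nonzero power series in $X_m$ of order $d$. The Weierstrass preparation theorem then produces a unique factorization $f=u\cdot P$ with $u\in k[[X_1,\dots,X_m]]^\times$ and $P\in k[[X_1,\dots,X_{m-1}]][X_m]$ a Weierstrass polynomial of degree $d$ in $X_m$. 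By the inductive hypothesis and Gauss's lemma, the polynomial ring $k[[X_1,\dots,X_{m-1}]][X_m]$ is a UFD, so $P$ factors uniquely into irreducibles there; one then checks that one may choose the irreducible factors to be Weierstrass polynomials, and that each such factor stays irreducible in the ambient ring $k[[X_1,\dots,X_m]]$. Unique factorization in $k[[X_1,\dots,X_m]]$ follows after absorbing the remaining ambiguity into the unit $u$.

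For the convergent case $k\{X_1,\dots,X_m\}$ with $k\subseteq\C$, the same recipe applies verbatim, with two points of attention. First, a linear change of coordinates over $k$ preserves convergence, so the reduction to an $X_m$-regular element stays inside the convergent subring. Second, one uses the analytic Weierstrass preparation theorem, which guarantees that if $f$ is convergent then so are the unit $u$ and the coefficients of $P$ in $k\{X_1,\dots,X_{m-1}\}$. Combined with the inductive hypothesis that $k\{X_1,\dots,X_{m-1}\}$ is a UFD and with Gauss's lemma, the induction proceeds in precisely the same fashion.

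The delicate point, and the one I expect to be the main obstacle in a careful write-up, is the compatibility between the full (formal or convergent) power series ring and its polynomial subring of Weierstrass polynomials. Concretely: if a Weierstrass polynomial $P$ factors in the larger ring as $P=g\cdot h$, one must show that, up to units, both $g$ and $h$ are themselves Weierstrass polynomials, so that the factorization already occurs inside $k[[X_1,\dots,X_{m-1}]][X_m]$ (resp.\ $k\{X_1,\dots,X_{m-1}\}[X_m]$). This is exactly where one invokes the uniqueness clause of the Weierstrass preparation theorem, and it is what glues together the reduction from the ambient ring to the polynomial ring in $X_m$. Everything else reduces to standard bookkeeping, once that compatibility is in hand.
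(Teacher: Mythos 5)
The paper offers no proof of this statement at all: it is recalled as a well-known fact (the classical references being Zariski--Samuel or Bourbaki for the formal case and Gunning--Rossi for the convergent case), so there is nothing to compare your argument against line by line. Your proposal is the standard textbook proof --- induction on $m$, Weierstrass preparation, Gauss's lemma, and the compatibility of factorizations of a Weierstrass polynomial between $k[[X_1,\dots,X_{m-1}]][X_m]$ and the ambient power series ring via the uniqueness clause of preparation --- and you have correctly identified that last compatibility as the point needing care.

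One step as written would fail, though only in a case the paper never uses. In the formal statement $k$ is an \emph{arbitrary} field, and for finite $k$ an invertible \emph{linear} change of variables need not make a nonzero $f$ regular in $X_m$: over $\mathbb{F}_2$ the element $f=X_1X_2(X_1+X_2)$ has lowest-degree form vanishing at every point of $\mathbb{P}^1(\mathbb{F}_2)$, so $f(0,X_2)=0$ after every linear substitution. The standard repair is to use the (non-linear) automorphism $X_i\mapsto X_i+X_m^{e_i}$ for suitable exponents $e_i$, which makes any nonzero formal power series $X_m$-regular over any field; with that substitution your induction goes through verbatim. For the convergent case $k\subseteq\C$ the field is infinite, a generic linear change does work, and your argument is complete as stated --- which is also the only case ($k=\C$, $m=2$) the paper actually invokes.
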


From now on, if $g\in\M$ and $\rho$ is an irreducible (as a germ around a fixed point), we write $\ord_\rho(g)$
for the order of $g$ at $\rho$ (so $g$ is also considered as a quotient of germs around that point). 

\begin{lemma}\label{Alla}
Let $g\in\M$ and $\rho$ be an irreducible element of $\C\{z,\delta\}$.
\begin{enumerate}
\item If $\ord_\rho(g)\ne0$ and $\rho$ does not divide $\rho_z$, then $\ord_\rho(g_z)=\ord_\rho(g)-1$.
\item If $\ord_\rho(g)\ne0$ and $\rho$ divides $\rho_z$, then $\ord_\rho(g_z)\geq\ord_\rho(g)$.
\item If $\ord_\rho(g)=0$ then $\ord_\rho(g_z)\geq0$.
\end{enumerate}
\end{lemma}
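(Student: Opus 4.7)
The plan is to work locally around a point of the zero set of $\rho$, where every element of $\M$ is a ratio of germs in the UFD $\C\{z,\delta\}$ (Theorem~\ref{ufd}), and $\ord_\rho$ is just the $\rho$-adic valuation extended to the fraction field. Writing $n=\ord_\rho(g)$, I would factor
\[
g=\rho^{n}\,w,\qquad w=u/v,\qquad \rho\nmid u,\ \rho\nmid v.
\]
Differentiating in $z$ gives
\[
g_z=\rho^{\,n-1}\bigl(n\rho_z w+\rho\,w_z\bigr),
\]
so the whole lemma reduces to computing $\ord_\rho$ of the bracketed expression under the various hypotheses.

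The key sublemma is that $\ord_\rho(w_z)\ge 0$. Indeed, $w_z=(u_z v-u v_z)/v^{2}$; the numerator is a germ (hence has nonnegative $\rho$-adic order) and $\rho\nmid v^{2}$, so the quotient has order $\ge 0$. Granting this, I would split into three cases. In Case~1, $\rho\nmid\rho_z$ and $n\ne 0$, so $n\rho_z w$ has order $0$ (product of three factors each of order~$0$) while $\rho\,w_z$ has order $\ge 1$; the bracket has order exactly $0$ and thus $\ord_\rho(g_z)=n-1$. In Case~2, $\rho\mid\rho_z$ and $n\ne 0$, so both summands inside the bracket have order $\ge 1$, giving $\ord_\rho(g_z)\ge n$. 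In Case~3, $n=0$ and $g_z=w_z$ directly, so $\ord_\rho(g_z)\ge 0$.

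I do not expect any genuine obstacle. The only point requiring care is that $g$ is a priori a \emph{global} meromorphic function on a neighbourhood of $B$, so one must first fix a point of the vanishing locus of $\rho$, pass to germs there (where the UFD property of $\C\{z,\delta\}$ applies), and observe that $\ord_\rho$ is independent of the choice of such a point; after this setup the argument is a one-line differentiation plus routine bookkeeping of $\rho$-adic orders.
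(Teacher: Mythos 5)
Your proposal is correct and follows essentially the same route as the paper: factor $g=\rho^{k}u/v$ with $u,v$ germs prime to $\rho$, differentiate, and read off the $\rho$-adic order of the resulting bracket according to whether $\rho$ divides $\rho_z$. The only cosmetic difference is that you keep $w=u/v$ as a unit factor while the paper clears denominators over $v^2$; the order bookkeeping is identical.
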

\begin{proof}
Write $k=\ord_\rho(g)$ and suppose first that $k$ is not $0$. Write 
$$
g=\rho^k\frac{u}{v},
$$ 
where $u,v\in\H$ are chosen relatively prime (so $\rho$ divides neither $u$ nor $v$). We have 
$$
g_z=k\rho_z\rho^{k-1}\frac{u}{v}+\rho^k\frac{u_zv-uv_z}{v^2}
=\rho^{k-1}\frac{k\rho_zuv+\rho(u_zv-uv_z)}{v^2}
$$
and since $\rho$ does not divide $v$, we have 
$$
\ord_\rho(g_z)=k-1+\ord_\rho(k\rho_zuv+\rho(u_zv-uv_z)).
$$
If $\rho$ does not divide $\rho_z$, then it does not divide $\rho_zuv$, so 
$$
\ord_\rho(k\rho_zuv+\rho(u_zv-uv_z))=0.
$$ 
If $\rho$ divides $\rho_z$, then
$$
\ord_\rho(k\rho_zuv+\rho(u_zv-uv_z))
=1+\ord_\rho(k\rho_zuv\rho^{-1}+u_zv-uv_z)\geq1,
$$ 
hence $\ord_\rho(g_z)\geq k-1+1=k$.

Note that the case $k=0$ is trivial.
\end{proof}

The next lemma clarifies the condition for $\rho$ to divide or not $\rho_z$. 

\begin{lemma}\label{yoyo}
Let $\rho$ be an element of $\C\{z,\delta\}$. If $\rho$ is irreducible, then either $\rho$ does not divide $\rho_z$, or $\rho=\delta e$ for some unit $e$. 
\end{lemma}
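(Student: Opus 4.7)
My plan is to split the proof into two cases according to whether $\rho(z,0)$ vanishes identically as an element of $\C\{z\}$. In the first case, where $\rho(z,0)\equiv 0$, I would expand $\rho=\sum_{i,j\ge 0}c_{ij}z^i\delta^j$ as a convergent power series; the hypothesis gives $c_{i,0}=0$ for all $i$, so I can factor out a $\delta$ and conclude that $\delta\mid\rho$ in $\C\{z,\delta\}$. Since $\rho$ is irreducible and $\delta$ is a non-unit, $\rho$ and $\delta$ are associates, giving $\rho=\delta e$ for some unit $e$, as desired.

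In the remaining case, $\rho(z,0)$ is a non-zero element of $\C\{z\}$, so $\rho$ is $z$-regular of some order $d\ge 1$ (the lower bound $d\ge 1$ coming from the fact that $\rho$, being irreducible, is not a unit). The Weierstrass preparation theorem then gives $\rho=u\cdot p$ where $u\in\C\{z,\delta\}^\times$ and
\[
p(z,\delta)=z^d+a_{d-1}(\delta)z^{d-1}+\dots+a_0(\delta),
\qquad a_i(0)=0.
\]
Since $u$ is a unit, $\rho$ and $p$ are associates, and $\rho_z=u_zp+up_z$, whence $\rho\mid\rho_z$ if and only if $p\mid p_z$. But $p_z=dz^{d-1}+\sum_{i<d}ia_i(\delta)z^{i-1}$ has $z$-degree exactly $d-1$ (its leading coefficient is $d\ne 0$), which is strictly less than $d$. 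By the uniqueness clause of the Weierstrass division theorem (writing any element as $pq+r$ with $\deg_z r<d$), the only way $p$ can divide $p_z$ is if $p_z=0$, which contradicts $d\ge 1$. Hence $\rho\nmid\rho_z$ in this case.

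I do not expect a serious obstacle here: the whole argument is driven by Weierstrass preparation reducing the question to monic polynomials in $z$, after which the degree comparison between $p$ and $p_z$ settles matters. The only small point of care is verifying that when one writes $\rho=up$, the divisibility $\rho\mid\rho_z$ really does reduce cleanly to $p\mid p_z$; this follows because $u$ is a unit, so $\rho$ and $p$ generate the same principal ideal, and the extra term $u_zp$ in $\rho_z$ is automatically divisible by $p$.
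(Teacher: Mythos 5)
Your proof is correct and follows essentially the same route as the paper: split according to whether $\rho$ is $z$-regular, handle the non-regular case by factoring out $\delta$ and using irreducibility, and in the regular case apply Weierstrass preparation to reduce to a monic polynomial $p$ in $z$ and rule out $p\mid p_z$ by a degree comparison. Your appeal to the uniqueness clause of Weierstrass division just makes explicit the step the paper dismisses as ``impossible since $\omega$ is a monic polynomial in $z$.''
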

\begin{proof}
It is obvious if $\rho$ is a polinomial. Otherwise, either it is $z$-regular (in its development in power series, there is a monomial not involving $\delta$), or $\rho=\delta e$ for some unit $e$ (because $\rho$ is irreducible). If it is $z$-regular, by the Weierstrass Preparation Theorem, there exists a unit $e$ and a Weierstrass polynomial $\omega$ in the variable $z$, such that $\rho=e\omega$. Therefore, if $e\omega=\rho$ divides $\rho_z=e_z\omega+e\omega_z$, then $\omega$ divides $e\omega_z$, hence $\omega$ divides $\omega_z$ (since $e$ is a unit). This is impossible since $\omega$ is a monic polynomial in $z$. 
\end{proof}

\begin{lemma}\label{xavier1}
If $(x,y)$ is a solution of Equation \eqref{MD} over $\M$, with $y\ne0$, 
then $x_zy^{-1}\in\H$.
\end{lemma}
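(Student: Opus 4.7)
The plan is to show that for every irreducible germ $\rho \in \C\{z, \delta\}$ at every point of $B$, $\ord_\rho(x_z) \geq \ord_\rho(y)$; this will imply that $x_z/y$ has no poles on $B$, hence lies in $\H$. I would set $u = \ord_\rho(x)$, $v = \ord_\rho(y)$, and $a = \ord_\rho(f(\delta, z)) \geq 0$, so that equation \eqref{MD} yields $a + 2v = \ord_\rho(f(\delta, x))$. The argument then splits on the sign of $u$.

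When $u \neq 0$, the order of $f(\delta, x) = x(x^2 + \delta x + 1)$ is controlled by $u$: if $u \geq 1$, then $x^2 + \delta x + 1 \equiv 1 \pmod \rho$ gives $\ord_\rho(f(\delta, x)) = u$, while if $u \leq -1$ the cubic term dominates and $\ord_\rho(f(\delta, x)) = 3u$. Combining either with the bound $\ord_\rho(x_z) \geq u - 1$ from Lemma \ref{Alla} (or $\geq u$ if $\rho \mid \rho_z$, in which case Lemma \ref{yoyo} additionally forces $a = 0$), the required inequality $\ord_\rho(x_z) \geq v$ follows from a direct computation that uses the integrality of $v$ to eliminate inadmissible candidate configurations. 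The case $u = 0, v \leq 0$ is immediate: Lemma \ref{Alla} gives $\ord_\rho(x_z) \geq 0 \geq v$.

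The critical case, and the main obstacle, is $u = 0$ with $v \geq 1$. For this I would differentiate \eqref{MD} in $z$ to obtain
\[
f_X(\delta, x)\, x_z = f_z(\delta, z)\, y^2 + 2 f(\delta, z)\, y\, y_z, \qquad f_X(\delta, X) = 3X^2 + 2\delta X + 1.
\]
Using $\ord_\rho(f_z), \ord_\rho(f) \geq 0$ and Lemma \ref{Alla} applied to $y_z$, the right-hand side has $\rho$-adic order at least $\min(2v, a + 2v - 1) \geq v$, so it remains to prove $\ord_\rho(f_X(\delta, x)) = 0$. From the identity $f_X(\delta, x) = 3(x^2 + \delta x + 1) - (\delta x + 2)$ and $\ord_\rho(x^2 + \delta x + 1) = a + 2v \geq 2$, the assumption $\ord_\rho(f_X(\delta, x)) > 0$ would force $\delta x + 2 \equiv 0$ and $x^2 + \delta x + 1 \equiv 0 \pmod \rho$ simultaneously; these give $\delta^2 \equiv 4$ and $x \equiv \mp 1 \pmod \rho$, and so $\rho$, being irreducible and a divisor of $\delta \mp 2$, must equal (up to a unit) the germ of $\delta \mp 2$ at some point $(z_0, \mp 2) \in B$.

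I would then rule this out by a contradiction. Taking $\rho = \delta + 2$ for concreteness (the case $\delta - 2$ is symmetric), set $\epsilon = \delta + 2, \xi = x - 1$; a direct expansion gives $x^2 + \delta x + 1 = \xi^2 + \epsilon(1 + \xi)$. Since $\xi \equiv 0 \pmod \epsilon$, this expression is congruent to $\epsilon$ modulo $\epsilon^2$, so $\ord_\rho(x^2 + \delta x + 1) = 1$. But $\ord_\rho(f(\delta, z)) = 0$, because $f(-2, z) = z(z - 1)^2$ is not identically zero, and then \eqref{MD} forces $2v = 1$, contradicting $v \in \Z$. This contradiction yields $\ord_\rho(f_X(\delta, x)) = 0$ in the critical case, completing the proof.
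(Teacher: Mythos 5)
Your proof is correct and follows essentially the same route as the paper's: a case analysis on $\ord_\rho(x)$ using Lemma \ref{Alla} and order bookkeeping in Equation \eqref{MD}, differentiation of \eqref{MD} for the case $\ord_\rho(x)=0$, identification of $\rho=\delta\pm2$ in the critical subcase via the simultaneous vanishing conditions, and the expansion $x^2+\delta x+1=(x-1)^2+(\delta+2)x$ forcing order exactly $1$. The only (cosmetic) difference is that you restrict the critical case to $\ord_\rho(y)\ge1$ and close it with the parity contradiction $2v=1$, where the paper instead concludes $j=0$ in its Case 3b1; both resolutions rest on the same computation.
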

\begin{proof}
Consider a solution $(x,y)$ of the Manin-Denef Equation
$$
(z^3+\delta z^2+z)y^2=x^3+\delta x^2+x
$$
over $\M$. Let $(\rho_1,\rho_2)$ be an arbitrary point of $B$ and $\rho$ be an irreducible at $(\rho_1,\rho_2)$. 
Write 
$$
\ord_\rho(x)=k,\quad \ord_\rho(x_z)=k', \quad\ord_\rho(y)=j\quad\textrm{and}\quad \ord_\rho(x^3+\delta x^2+x)=m.
$$ 
We will prove that $j$ is at most $k'$.
 
\emph{CASE 1:}  Assume that $k\ne 0$ and $\rho$ does not divide $f(\delta,z)=z^3+\delta z^2+z$. 

If $k>0$, then $k=m=2j>0$, hence we have 
$$
k'-j\ge k-1-j=2j-1-j=j-1\geq0
$$
by Lemma \ref{Alla}. 

If $k<0$, then we have $3k=m=2j<0$, so $j$ is a multiple of $3$. Hence we have 
$$
k'-j\ge k-1-j=\frac{2j}{3}-1-j=-\frac{1}{3}j-1\geq0
$$
by Lemma \ref{Alla}. 

\emph{CASE 2:} Assume that $k\ne 0 $ and $\rho$ divides $f(\delta,z)=z^3+\delta z^2+z$. Modulo a unit, either $\rho=z$, or $\rho=z^2+\delta z+1$ (in particular, $\rho$ does not divide $\rho_z$ by Lemma \ref{yoyo}). If $k>0$, then $k=2j+1$ and $j\geq0$, hence 
$$
k'-j=k-1-j=2j+1-1-j=j\geq0.
$$ 
If $k<0$, then we have $j<0$ and $3k=2j+1$. In particular, since $2j+1$ is a multiple of $3$, we have $j\leq-2$, hence 
$$
k'-j=k-1-j=\frac{2j+1}{3}-1-j=-\frac{1}{3}j-\frac{2}{3}\geq0.
$$ 

\emph{CASE 3:} Assume $k=0$. By Lemma \ref{Alla} we have $k'=\ord_\rho(x_z)\geq0$. Consider the equation:
\begin{equation}\label{mariac}
[(3z^2+2\delta z+1)y+2(z^3+\delta z^2+z)y_z]y=x_z(3x^2+2\delta x+1)
\end{equation}
obtained by differentiating both members of Equation \eqref{MD} with respect to $z$. 
We will write 
$$
\ord_\rho(3x^2+2\delta x+1)=\ell.
$$ 
Observe that since $k=0$ we have $m\geq0$, as otherwise we would have $\ord_\rho(x^2+\delta x +1)<0$, hence $\ord_\rho(x^2+\delta x)<0$, hence $\ord_\rho(x+\delta)<0$, hence $k=\ord_\rho(x)<0$. Similarly we have $\ell\geq0$.

\emph{Case 3a:} Assume $m=0$. If $\rho$ divides $f$ then we get $0=m=2j+1$ from Equation \eqref{MD}, which is impossible, and if $\rho$ does not divide $f$ then Equation \eqref{MD} gives $0=m=2j$, hence $j=0$, which implies $k'-j=k'\geq0$.

\emph{Case 3b\,:} Assume $m>0$. Since $\ord_\rho(x)=0$, we have
$$
\ord_\rho(x^2+\delta x+1)=\ord_\rho(x^3+\delta x^2+x)=m>0.
$$

\emph{Case 3b1:} Assume $\ell>0$. We will prove that $m=1$ and $k'\geq j=0$. We have 
$$
\ord_\rho(2x+\delta)=\ord_\rho(2x^2+\delta x)=
\ord_\rho((3x^2+2\delta x+1)-(x^2+\delta x+1))>0
$$
(since $\ell$ and $m$ are positive), hence $2x+\delta=\rho^nu$ for some positive integer $n$ and some $u$ such that $\ord_\rho(u)=0$. Therefore, we have
$$
\begin{aligned}
0&<\ord_\rho(x^2+\delta x+1)\\
&=\ord_\rho\left(\left(\frac{\rho^nu-\delta}{2}\right)^2+\delta\frac{\rho^nu-\delta}{2}+1\right)\\
&=\ord_\rho\left(\frac{\rho^{2n}u^2}{4}+\frac{\delta^2}{4}-\frac{\delta^2}{2}+1\right)\\
&=\ord_\rho\left(\frac{\delta^2}{4}-\frac{\delta^2}{2}+1\right)\\
&=\ord_\rho(\delta^2-4).
\end{aligned}
$$
Therefore, we have $\rho=(\delta\pm 2)e$ for some unit $e$. Suppose for example that $\rho=\delta+2$ (the general case is done similarly). We have
$$
0<m=\ord_{\delta +2}(x^{2}+\delta x+1)=
\ord_{\delta +2}((x-1)^{2}+(\delta +2)x),
$$ 
hence $\ord_{\delta +2}(x-1)>0$, and since 
$\ord_{\delta +2}((\delta +2)x)=1$ we deduce 
$$
\ord_{\delta+2}((x-1)^{2}+(\delta +2)x)=1.
$$ 
Hence we have $m=1$, and $\rho$ must divide $f$ (otherwise $m$ would be even by Equation \eqref{MD}). We conclude from Equation \eqref{MD} that $j=0$. We have $k'\geq0=j$ by Lemma \ref{Alla}.

\emph{Case 3b2:} Assume $\ell=0$. We have
$$
\ord_\rho(f_z)=\ord_\rho(x_z(3x^2+2\delta x+1))=k'+\ell=k'.
$$

Suppose $\rho$ does not divide $\rho_z$. Since $m=\ord_\rho(f)\ne0$, by Lemma \ref{Alla} we have 
$$
k'=\ord_\rho(f_z)=m-1.
$$
If $\rho$ does not divide $f$, then from Equation \eqref{MD} we have $m=2j>0$, hence $k'=m-1=2j-1\geq0$, hence $j\geq1$ and $k'\geq j$. If $\rho$ divides $f$ then from Equation \eqref{MD} we have $m=2j+1>0$, hence $k'=m-1=2j\geq0$, hence $k'\geq j$.

Now assume that $\rho$ divides $\rho_z$. Since $m=\ord_\rho(f)\ne0$, we have 
$$
k'=\ord_\rho(f_z)\geq m.
$$
Since $\rho$ divides $\rho_z$, we have $\rho\ne z$ and $\rho\ne z^2+\delta z+1$ by Lemma \ref{yoyo}, hence $\ord_\rho(f)=0$, and from Equation \eqref{MD} we obtain $m=2j>0$. Therefore, we have $k'\geq m=2j>0$, hence $k'\geq j$.
\end{proof}

\subsection{Behaviour around the point $(z,\delta)=(1,-2)$}

In this subsection, by \emph{order of a function $w$ of $\H$ at an irreducible $\rho$}, denoted by $\ord_\rho(h)$, we will mean the highest power of $\rho$ that divides that function in $\H$. We extend this notion to meromorphic functions in the usual way, and use the usual terminology of zeros and poles. 

\begin{lemma}\label{alphaA}
Whenever $\left. \alpha_{xy}\right|_{z=1,\delta=-2}$ is defined, $z-1$ is either a zero or a pole of $x-1$, and in this case we have
\begin{equation}\label{eqlemalphaA1}
\left. \alpha_{xy}\right|_{z=1,\delta=-2}\cdot \ord _{z-1} (x-1)= 
\left. A_{xy}\right|_{z=1,\delta=-2}.
\end{equation}
Moreover, if $\ord_{z-1}(x-1)<-2$ or if $\ord_{z-1}(x-1)>1$, then we have
$$
\left. \alpha_{xy}\right|_{z=1,\delta=-2}=0= 
\left. A_{xy}\right|_{z=1,\delta=-2}.
$$
\end{lemma}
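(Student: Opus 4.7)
The plan is to start from the algebraic identity
$$
(z-1)\,x_z\,\alpha_{xy}=(x-1)\,A_{xy},
$$
which is immediate from the definitions of $A_{xy}$ and $\alpha_{xy}$ and holds in $\M$. Locally at $(1,-2)$, factor $x-1=(z-1)^n g$ in the fraction field of the UFD $\C\{z-1,\delta+2\}$, where $n=\ord_{z-1}(x-1)$ and $g$ is a meromorphic germ with $z-1\nmid g$. Logarithmic differentiation gives $\frac{(z-1)x_z}{x-1}=n+(z-1)\frac{g_z}{g}$, so the identity above rearranges to
$$
A_{xy}-n\,\alpha_{xy}=(z-1)\,\frac{g_z}{g}\,\alpha_{xy}. \qquad (\ast)
$$

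I first show $n\ne 0$ whenever $\alpha_{xy}|_{z=1,\delta=-2}$ is defined. Suppose for contradiction that $n=0$. Then $\ord_{z-1}(x-1)=0$ forces $\ord_{z-1}(x)\ge 0$ (otherwise the dominant $x$ would make $x-1$ share its negative order). Hence $\ord_{z-1}(f(\delta,x))\ge 0$, and since $f(\delta,1)=\delta+2$ is a nonzero element of $\M$ (so $z-1\nmid f(\delta,z)$), \eqref{MD} gives $2\,\ord_{z-1}(y)\ge 0$. Then $\ord_{z-1}(\alpha_{xy})=-1-\ord_{z-1}(y)\le -1$, so $\alpha_{xy}$ has a pole along $z=1$, contradicting that the double evaluation at $(1,-2)$ is defined.

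The crux is the evaluation of $(\ast)$ at $z=1$, then at $\delta=-2$. Both $A_{xy}|_{z=1,\delta=-2}$ (by Lemma~\ref{xavier1}) and $\alpha_{xy}|_{z=1,\delta=-2}$ (by hypothesis) are finite. The right-hand side of $(\ast)$ vanishes by the following argument: because $z-1\nmid g$, the restrictions $g|_{z=1}$ and $g_z|_{z=1}$ are well-defined meromorphic germs in $\delta$ with $g|_{z=1}\not\equiv 0$, so $(g_z/g)\,\alpha_{xy}|_{z=1}$ is a genuine meromorphic germ in $\delta$; multiplying by $(z-1)|_{z=1}=0$ produces the identically zero germ, whose value at $\delta=-2$ is $0$. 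This proves~\eqref{eqlemalphaA1}. For the \emph{Moreover} clause I use~\eqref{MD} to sharpen $n$: if $n\ge 1$, then $x|_{z=1}=1$ and \eqref{MD} restricted to $z=1$ reduces to $(\delta+2)(y|_{z=1})^2=\delta+2$, so $\ord_{z-1}(y)=0$ and $\ord_{z-1}(\alpha_{xy})=n-1$; if $n<0$, pole-counting in~\eqref{MD} forces $n=2m$ with $m\le -1$ and $\ord_{z-1}(y)=3m$, whence $\ord_{z-1}(\alpha_{xy})=-m-1$. In both regimes $\ord_{z-1}(\alpha_{xy})\ge 1$ precisely when $n>1$ or $n<-2$, so $\alpha_{xy}$ vanishes identically on the slice $z=1$ and $\alpha_{xy}|_{z=1,\delta=-2}=0$; then~\eqref{eqlemalphaA1} yields $A_{xy}|_{z=1,\delta=-2}=n\cdot 0=0$.

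The main obstacle I expect is the careful bookkeeping of the double evaluation at $(1,-2)$: meromorphic functions on $B$ restricted first to the hypersurface $z=1$ and then to the point $\delta=-2$ are sensitive to the order of operations, and one must distinguish ``vanishes identically on the slice'' from ``vanishes at the given point of the slice''. The error term $(z-1)(g_z/g)\alpha_{xy}$ drops out precisely because the factor $(z-1)$ forces identical vanishing along $z=1$, which is strictly stronger than pointwise vanishing and makes the conclusion robust against the potential non-regularity of $g_z/g$ at the specific point $(1,-2)$.
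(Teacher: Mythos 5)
Your proof is correct and follows essentially the same route as the paper's: rule out $\ord_{z-1}(x-1)=0$ by showing $\alpha_{xy}$ would then have a pole along $z=1$, read off the order as the value of the logarithmic derivative $(z-1)x_z/(x-1)$ on the slice $z=1$, and settle the \emph{Moreover} clause by order-counting in \eqref{MD}. The only difference is presentational: you carry the error term $(z-1)(g_z/g)\alpha_{xy}$ additively and deduce $\left. A_{xy}\right|_{z=1,\delta=-2}=0$ from the already-proved identity, whereas the paper computes $\ord_{z-1}(A_{xy})>0$ directly; both are fine.
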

\begin{proof}
Assume that $z-1$ is neither a zero nor a pole of $x-1$. By Equation \eqref{MD}, since it is not a pole of $x-1$, it is not a pole of $y$ either, hence $\alpha_{xy}$ has a pole at $z=1$ and therefore $\left. \alpha_{xy}\right|_{z=1}$ is undefined (hence $\left. \alpha_{xy}\right|_{z=1,\delta=-2}$ as well). 

Assume that $x-1$ has a zero or a pole at $z=1$. We have 
$$
\alpha_{xy}=\frac{x-1}{(z-1)y}
=\frac{x_z}{y} \left(x_z\frac{z-1}{x-1}\right)^{-1}
=
A_{xy}\left(x_z\frac{z-1}{x-1}\right)^{-1}.
$$
By considering the functions $x$ and $y$ as power series  in  $z-1$, the function   
$$
\frac{x_z}{x-1}
$$
is the logarithmic derivative of  $x-1$, so the value of 
$$
(z-1)\frac{x_z}{x-1}
$$ 
at $z=1$ is equal to the order of $x-1$ at $z-1$, which proves Equation \eqref{eqlemalphaA1}. 

Consider the case that $\ord_{z-1} (x-1)<-2$. By Equation \eqref{MD} we have $3\ord_{z-1}(x-1)=2\ord_{z-1}(y)$, hence $\ord_{z-1}(y)<-3$ and the quantity
$$
\ord_{z-1}(A_{xy}) =\ord_{z-1}\left(\frac{x_z}{y}\right)=\ord_{z-1}(x-1)-1-\ord_{z-1}(y)=\frac{-1}{3}\ord_{z-1}(y)-1
$$ 
is $>0$, hence $\left. A_{xy} \right|_{z=1,\delta=-2}=0$ and the conclusion of the Lemma holds by Equation \eqref{eqlemalphaA1}. 

If $\ord_{z-1}(x-1)>1$, then by Equation \eqref{MD} we have $\ord_{z-1}(y)=0$, and the quantity
$$
\ord_{z-1}(A_{xy}) =\ord_{z-1}\left(\frac{x_z}{y}\right)=\ord_{z-1}(x-1)-1
$$ 
is $>0$, hence $\left. A_{xy} \right|_{z=1,\delta=-2}=0$ and the conclusion of the Lemma holds by Equation \eqref{eqlemalphaA1}. 
\end{proof}

\section{Integrality of $x_zy^{-1}$ at a singular point}\label{secuni}

\subsection{Parametrisation}\label{secparam}

Consider the affine curve $\tilde E$ (in coordinates $(X,Y)$) defined by the equation
$$
Y^2=X(X-1)^2.
$$
We will consider the following notation till the end of this work (inspired by the analogy with elliptic curves). 

\begin{notation}
\begin{itemize}
\item If $g$ is a function of the variable $u$ only, we will use the usual notation for the first and second derivatives of $g$: $g'$ and $g''$.
\item $\tilde f(z)=z(z-1)^2$
\item $\displaystyle{\twp (u) =\left(\frac{1+e^u}{1-e^u}\right)^2}$
\item $\displaystyle{\tilde \xi (u) =2\frac{1+e^u}{1-e^u}}$
\item $\tilde W=(\twp,\twp')$
\item $\tilde \omega =2\pi i$
\item $\tilde \Lambda =\{k\tilde \omega\colon k\in \Z\}$
\end{itemize}
\end{notation}

The following Lemma lists some properties of these functions. 

\begin{lemma}\label{properties}
The functions $\twp $, $\twp '$, and $\tilde \xi$ satisfy the following:
\begin{enumerate}
\item (Functional equations) We have
$$
(\twp')^2 =\twp(\twp-1)^2=\tilde f\circ\twp,
$$
hence also
$$
\twp''=\frac{1}{2}\tilde f_z\circ\twp.
$$
\item (Parametrisation) The pair $\tilde W =(\twp ,\twp')$ can be seen as an onto map 
$$
\C\to(\tilde E\cup \{ \infty\})\setminus \{ (1,0)\}.
$$  
\item\label{propertiesper} (Periodicity) We have
$$
\twp (u+\tilde \omega ) =\twp (u),\quad
 \twp' (u+\tilde \omega ) =\twp' (u),\quad
 \tilde \xi (u+\tilde \omega)= \tilde \xi (u),
$$
and if $\tilde W(u_1)=\tilde W(u_2)$, then $u_1-u_2\in\tilde\Lambda$.
\item We have
$$
\tilde \xi' = \twp-1,\quad
 \twp =\frac{1}{4}\tilde \xi ^2,
\quad\textrm{and}\quad
\twp' = \frac{1}{2} (\twp -1)\tilde \xi.
$$
\end{enumerate}
\end{lemma}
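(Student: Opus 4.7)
The plan is to reduce everything to the single auxiliary function $w(u)=\frac{1+e^u}{1-e^u}$, so that $\twp=w^2$ and $\tilde\xi=2w$. A one-line computation gives $w'=\frac{2e^u}{(1-e^u)^2}$, and a second line yields the key identity $w^2-1=2w'$. With these two facts in hand, the rest of the lemma is essentially bookkeeping.

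Items 1 and 4 both follow immediately from these two observations. For item 4, $\twp=\tfrac14\tilde\xi^2$ is clear from $\tilde\xi=2w$; the identity $\tilde\xi'=\twp-1$ is just $2w'=w^2-1$; and $\twp'=2ww'=w(w^2-1)=\tfrac12\tilde\xi(\twp-1)$. For item 1, squaring $\twp'=2ww'$ gives
$$
(\twp')^2=4w^2(w')^2=w^2(w^2-1)^2=\twp(\twp-1)^2=\tilde f\circ\twp,
$$
and then differentiating this relation and cancelling one factor of $\twp'$ (not identically zero) yields $\twp''=\tfrac12\tilde f_z\circ\twp$.

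Periodicity in item 3 is immediate since each of $\twp,\twp',\tilde\xi$ depends on $u$ only through $e^u$. For the injectivity statement, if $\tilde W(u_1)=\tilde W(u_2)$, then from $\twp'=w(\twp-1)$ and the observation that $\twp\ne 1$ everywhere on the domain of $w$ (since $w=\pm 1$ would force $e^u\in\{0,\infty\}$), one can recover $w$ from $\tilde W$. Hence $w(u_1)=w(u_2)$, and since $w$ is a Möbius function of $e^u$, this forces $e^{u_1}=e^{u_2}$, i.e., $u_1-u_2\in 2\pi i\,\Z=\tilde\Lambda$.

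The only point requiring real care is the parametrisation in item 2. Since $u\mapsto e^u$ surjects $\C\to\C\setminus\{0\}$ and $t\mapsto\frac{1+t}{1-t}$ is a Möbius bijection between $\mathbb{P}^1\setminus\{1\}$ and $\mathbb{P}^1\setminus\{-1\}$, the image of $w$ is $(\C\cup\{\infty\})\setminus\{\pm 1\}$, with $w=\infty$ occurring exactly when $e^u=1$. Given any $(X,Y)\in\tilde E\cup\{\infty\}$ with $(X,Y)\ne(1,0)$, I would solve $w^2=X$ and then use $Y=w(X-1)$ (from item 4) to fix the sign of $w$; this yields a preimage in the image of $w$ precisely when $X\ne 1$, so $(1,0)$ is the unique excluded point. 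I do not expect any serious obstacle: the whole lemma is a collection of explicit verifications around the formula for $w$.
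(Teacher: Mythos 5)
Your proposal is correct and takes essentially the same route as the paper, which dismisses items 1, 2 and 4 and the periodicity as straightforward computation and only argues the injectivity claim; there, the paper splits into $\tilde\xi(u_1)=\pm\tilde\xi(u_2)$ and rules out the minus sign via $\twp'=\frac{1}{2}(\twp-1)\tilde\xi$, which is the same mechanism as your observation that $w=\twp'/(\twp-1)$ is recoverable from $\tilde W$ because $\twp$ never takes the value $1$. Your write-up is, if anything, more complete than the paper's, particularly for the surjectivity in item 2.
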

\begin{proof}
We only comment on the last part of item 3, as the rest of the proof is straightforward computation. From $\twp(u_1)=\twp(u_2)$, we deduce that either $\xi(u_1)=\xi(u_2)$, in which case we easily get $e^{u_1}=e^{u_2}$, or $\xi(u_1)=-\xi(u_2)$, in which case
we get $e^{u_1+u_2}=-1$. The second case is impossible, because the last relation of item 4 implies $\twp '(u_1)=-\twp'(u_2) $, which contradicts the hypothesis.
\end{proof}

\subsection{The Group Law}

The affine equation 
\begin{equation}\label{E}
Y^2=X^3+\delta X^2+X
\end{equation}
defines an affine surface in the triple of variables $(\delta,X,Y)$. We consider it as a bundle of affine curves $E_\delta$, parametrized by the parameter $\delta$, and we write $\E_\delta$ for each corresponding projective curve. For each fixed $\delta$, the point $[W:X:Y]=[0:0:1]$ of the projective equation 
$$
WY^2=X^3+\delta WX^2+W^2X
$$ 
is a point of $\E_\delta$, called \emph{point at infinity} and denoted by $\infty$. For $\delta \ne \pm 2$, the polynomial $f(\delta,z)=z^3+\delta z^2+z$ in the variable $z$ is non-singular (i.e. has no multiple zeros), hence $\E_\delta$ is an elliptic curve with neutral $\infty$. We now recall the addition law on each $\E_\delta$, which we denote by $\oplus$, in affine coordinates:
 \begin{enumerate}
\item $\infty$ is the neutral element.
\item If $P_1=(a,b)$ and $P_2=(a,-b)$, then $P_1\oplus P_2=\infty$.
\item If $P_1=(a_1,b_1)$ and $P_2=(a_2,b_2)$, with $a_1\ne 0$ and $a_2=0$, then $P_1\oplus P_2=(a,b)$, where $a=a_1^{-1}$ and $b=-b_1a_1^{-2}$.
\item If $P_1=(a_1,b_1)$ and $P_2=(a_2,b_2)$, with $a_1a_2\ne 0$ so that case 2 does not apply, then $P_1\oplus P_2=(a,b)$, with
$$
a=\frac{(b_1-a_1m)^2}{a_1a_2}
\quad\textrm{and}\quad
b=-b_1-m(a-a_1),
$$
where
$$
m=
\begin{cases}
\displaystyle{\frac{b_2-b_1}{a_2-a_1}}&\textrm{if $a_1\ne a_2$,}\vspace{10pt}\\
\displaystyle{\frac{3a_1^2+2\delta a_1+1}{2b_1}}&\textrm{if $a_1=a_2$ and $b_1\ne0$.}
\end{cases}
$$
\end{enumerate}

Observe that $\ominus (a,b)=(a,-b)$ and the points of the form $(a_0,0)$
are exactly the points of order $2$ --- so there are three such points.
 
For $\delta =\pm 2$, Equation \eqref{E} defines a curve of genus $0$. Nevertheless its points, with the exception of the point $(1,0)$, form a group under the same law, with the same neutral element. Observe that the point $(1,0)$ cannot be included in the group structure of $\tilde E=E_{-2}$, as we would have $P\oplus (1,0)=(1,0)$ for any point $P$ of $\tilde E$. Therefore, the only point of order $2$ on $\tilde E$ is $(0,0)$. 
 
\begin{lemma}\label{lawtransfer}
The addition on the complex plane transfers through the pair $(\twp,\twp')$. This makes the set $\tilde E\setminus\{(1,0)\}$ into a group, under the law which is given by the same formulas giving $\oplus$. 
\end{lemma}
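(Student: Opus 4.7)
Items 2 and 3 of Lemma \ref{properties} already give a group structure on $(\tilde E \cup \{\infty\}) \setminus \{(1,0)\}$ essentially for free: the map $\tilde W$ is surjective onto this set with fibers equal to the cosets of $\tilde\Lambda$, so it descends to a bijection $\C/\tilde\Lambda \to (\tilde E \cup \{\infty\})\setminus \{(1,0)\}$, through which the additive group law of $\C/\tilde\Lambda$ transports. The neutral element is $\tilde W(0)=\infty$, since $\twp$ has a pole on $\tilde\Lambda$. The substance of the lemma is therefore that this transported law is given by the explicit rational expressions defining $\oplus$.

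The key technical tool is an algebraic addition formula for $\tilde\xi$. Inverting $\tilde\xi(u)=2\frac{1+e^u}{1-e^u}$ gives $e^u=\frac{\tilde\xi(u)-2}{\tilde\xi(u)+2}$; combining this with $e^{u_1+u_2}=e^{u_1}e^{u_2}$ and clearing denominators yields
$$
\tilde\xi(u_1+u_2)=\frac{\tilde\xi(u_1)\tilde\xi(u_2)+4}{\tilde\xi(u_1)+\tilde\xi(u_2)}.
$$
Using the identities $\twp=\frac{1}{4}\tilde\xi^2$ and $\twp'=\frac{1}{2}(\twp-1)\tilde\xi$ from Lemma \ref{properties}, one then obtains explicit rational expressions for $\twp(u_1+u_2)$ and $\twp'(u_1+u_2)$ in terms of $\twp(u_i)$ and $\twp'(u_i)$.

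I would then match this output with the formulas defining $\oplus$, case by case. The cases involving $\infty$ and the involution $(a,b)\mapsto(a,-b)$ correspond to $u_i\in\tilde\Lambda$ and $u_1+u_2\in\tilde\Lambda$, respectively, and are immediate; case 3 (some $a_i=0$) corresponds to $\tilde\xi(u_i)=0$, i.e. $e^{u_i}=-1$. The generic subcase of case 4, with $a_1\ne a_2$, is the main computation: writing $\xi_i=2Y_i/(X_i-1)$ and substituting $X_i=\xi_i^2/4$, $Y_i=\xi_i(\xi_i^2-4)/8$ into the slope $m=(Y_2-Y_1)/(X_2-X_1)$, one simplifies to check that $(Y_1-X_1 m)^2/(X_1 X_2)$ equals $\tilde\xi(u_1+u_2)^2/4$, and that $-Y_1-m(X_3-X_1)$ equals $\frac{1}{2}(X_3-1)\tilde\xi(u_1+u_2)$. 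The doubling subcase ($a_1=a_2$, $b_1\ne 0$) is then obtained by letting $u_2\to u_1$ in these identities, both sides being continuous in $u_2$.

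The main obstacle will be the bookkeeping in the case-4 verification: it reduces to a polynomial identity in $\xi_1,\xi_2$, entirely elementary but requiring careful sign management. An alternative route is to argue by degeneration from the smooth curves $E_\delta$ as $\delta\to -2$: the chord-and-tangent formulas are polynomial in $\delta$ and rational in the affine coordinates, so the group axioms, being polynomial identities in those quantities, pass to the limit on $\tilde E\setminus\{(1,0)\}$ (away from the collapsing node $(1,0)$); one then identifies $(\twp,\twp')$ as the $\delta\to -2$ limit of the Weierstrass parametrization of $E_\delta$, which reduces to essentially the same algebraic computation.
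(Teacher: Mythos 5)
Your proposal is correct and takes essentially the same route as the paper: the paper's proof simply asserts that $(\twp(u_1+u_2),\twp'(u_1+u_2))=(\twp(u_1),\twp'(u_1))\oplus(\twp(u_2),\twp'(u_2))$ holds in all cases by "straightforward computation" left to the reader, and your plan is a concrete, workable execution of exactly that verification. Your organizing device — the addition formula $\tilde\xi(u_1+u_2)=\bigl(\tilde\xi(u_1)\tilde\xi(u_2)+4\bigr)/\bigl(\tilde\xi(u_1)+\tilde\xi(u_2)\bigr)$ obtained from $e^{u_1+u_2}=e^{u_1}e^{u_2}$, combined with $\twp=\tfrac14\tilde\xi^2$ and $\twp'=\tfrac12(\twp-1)\tilde\xi$ — does make the generic case reduce cleanly to a short polynomial identity (e.g. $Y_1-X_1m=-\xi_1\xi_2(\xi_1\xi_2+4)/\bigl(8(\xi_1+\xi_2)\bigr)$), so it genuinely fills in the computation the paper omits.
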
 
\begin{proof}
We leave to the reader verifying that the following 
$$
(\twp (u_1+u_2), \twp'(u_1+u_2))=(\twp (u_1),\twp'(u_1))\oplus (\twp(u_2),\twp'(u_2))
$$
holds in all cases, as this is a straightforward computation.
\end{proof}

\subsection{A representation lemma}\label{secrep}

In this section, we fix $\rho\ge 1$ a real number or $+\infty$. Given an arbitrary $H\in\H_z(\bar D(0,\rho))$, we will solve the following differential equation
$$
H=
 \beta +\gamma(z-1)+\tilde f \tilde h_z+\frac{1}{2}\tilde f_z \tilde h
$$
in the unknowns $\beta$, $\gamma$ and $h$, where $\beta,\gamma\in\C$ and $h\in\H_z(\bar D(0,\rho))$ (indeed we will show that $\beta$, $\gamma$ and $h$ exist and are uniquely determined by $H$). 

\begin{lemma}\label{central0}
Let $b\in \H_{z} (\bar D(0,\rho))$. 
\begin{enumerate}
\item There is a unique function $g\in \H_{z} (\bar D(0,\rho))$ such that
\begin{equation}\label{diff1}
zg_z+\frac{1}{2}g=b.
\end{equation}
\item There is a unique function $g\in \H_{z} (\bar D(0,\rho))$ and a unique $\gamma\in \C$ such that 
\begin{equation}\label{diff2}
z(z-1)g_z+\frac{1}{2}(3z-1)g=b-\gamma.
\end{equation}
\end{enumerate}
\end{lemma}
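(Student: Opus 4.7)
The plan is to solve each first-order linear ODE by matching power series at $z=0$, using the hypothesis $\rho \geq 1$ — which forces any admissible $g$ to be analytic in a neighborhood of $z = 1$ — as the boundary condition that singles out a unique solution. Expand $b(z) = \sum_{n \geq 0} b_n z^n$, which converges on some $D(0,r)$ with $r > \rho \geq 1$, and look for $g(z) = \sum_{n \geq 0} g_n z^n$.

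For Part 1, matching coefficients in $z g_z + \tfrac{1}{2} g = b$ gives $\tfrac{2n+1}{2} g_n = b_n$, so $g_n = \tfrac{2 b_n}{2n+1}$ is uniquely forced. Since $|g_n| \leq 2 |b_n|$, the series for $g$ converges on $D(0,r)$, so $g \in \H_z(\bar D(0, \rho))$, and existence and uniqueness are simultaneous.

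For Part 2, matching coefficients in $z(z-1) g_z + \tfrac{1}{2}(3z-1) g = b - \gamma$ yields
$$
-\tfrac{1}{2} g_0 = b_0 - \gamma, \qquad \tfrac{2n+1}{2}(g_{n-1} - g_n) = b_n \text{ for } n \geq 1,
$$
and telescoping the recursion gives $g_n = g_0 - \sum_{k=1}^n \tfrac{2 b_k}{2k+1}$. Since $b$ is analytic on a neighborhood of $\bar D(0, \rho)$, Cauchy estimates give $|b_k| \leq M \sigma^{-k}$ for some $\sigma > \rho \geq 1$, so the tail $H := \sum_{k \geq 1} \tfrac{2 b_k}{2k+1}$ converges absolutely. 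Analyticity of $g$ in a neighborhood of $\bar D(0, 1)$ requires the power series of $g$ at $0$ to have radius of convergence strictly greater than $1$, so in particular $g_n \to 0$; this forces $g_0 = H$. With this choice, $g_n = -\sum_{k > n} \tfrac{2 b_k}{2k+1}$ satisfies $|g_n| \leq C \sigma^{-n}$, so $g$ is analytic on $D(0, \sigma) \supset \bar D(0, \rho)$ and hence lies in $\H_z(\bar D(0, \rho))$. Once $g_0$ is fixed, $\gamma = b_0 + \tfrac{1}{2} g_0$ is then forced.

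For a self-contained uniqueness argument, I would take the difference $(\hat g, \hat \gamma)$ of two putative solutions: it satisfies the homogeneous equation $z(z-1) \hat g_z + \tfrac{1}{2}(3z-1) \hat g = -\hat\gamma$, whose coefficient analysis forces $\hat g_0 = 2\hat\gamma$ and $\hat g_n = \hat g_{n-1}$, so $\hat g(z) = \tfrac{2 \hat\gamma}{1 - z}$; analyticity at $z = 1$ forces $\hat\gamma = 0$ and hence $\hat g \equiv 0$. The main delicate point, and the sole place the hypothesis $\rho \geq 1$ is used, is this interplay between the singularity of the ODE at $z = 1$ and analyticity there: without it, Part 2 would admit a one-parameter family of solutions indexed by a free choice of $g_0$, and uniqueness of $\gamma$ would fail.
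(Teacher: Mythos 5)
Your proof is correct, and for item 2 it takes a genuinely different route from the paper. Item 1 is essentially identical (coefficient matching $\left(n+\tfrac12\right)g_n=b_n$ plus the observation that the radius of convergence is preserved). For item 2, the paper does not redo a power-series analysis: it reduces equation \eqref{diff2} to equation \eqref{diff1} via the substitution $h=(z-1)g+2\gamma$, so that existence follows by solving \eqref{diff1} for $h$ and then setting $\gamma=\tfrac12 h(1)$ and $g=\bigl(h-h(1)\bigr)/(z-1)$, and uniqueness follows from uniqueness in item 1 together with the fact that $(z-1)(g_1-g_2)$ constant forces $g_1=g_2$. You instead analyze \eqref{diff2} directly: the telescoping recursion $g_n=g_0-\sum_{k=1}^n\tfrac{2b_k}{2k+1}$ shows that the general solution is a one-parameter family, and the requirement $g_n\to 0$ (equivalently, analyticity past $|z|=1$) pins down $g_0$; your homogeneous analysis exhibits the obstruction explicitly as $\hat g=\tfrac{2\hat\gamma}{1-z}$. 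Both arguments use the hypothesis $\rho\ge 1$ at exactly the same structural point --- the paper to divide $h-h(1)$ by $z-1$ inside $\H_z(\bar D(0,\rho))$, you to exclude the pole of $\tfrac{1}{1-z}$ at $z=1$ --- and your version has the merit of making the role of the singular point $z=1$ of the ODE completely explicit, at the cost of a slightly longer computation. The paper's substitution is worth knowing, though, since the identity $zh_z+\tfrac12 h=z(z-1)g_z+\tfrac12(3z-1)g+\gamma$ is reused in the proof of the Representation Lemma.
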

\begin{proof}
We prove item 1. We will find a solution $g$ in power series around the point $z=0$ and then we will observe that its radius of convergence is equal to the radius of convergence of a power series around $z=0$ for the function $b$. Write $b=\beta _0+\dots +\beta_n z^n+\cdots$. If 
$$
g=\gamma _0+\dots +\gamma_n z^n+\cdots,
$$
from Equation \eqref{diff1} we obtain, for any $n\geq 0$, 
$$
\left(n+\frac{1}{2}\right) \gamma _n=\beta _n.
$$
Define the $\gamma_n$ by the last relation. Since the quantity $n+\frac{1}{2}$ never vanishes, this relation defines the sequence $(\gamma_n)$ uniquely (hence also $g$, if it exists). Since 
$$
\lim_{n\rightarrow \infty} \left(n+\frac{1}{2}\right)^{\frac{1}{n}}=1,
$$ 
the radii of convergence of the power series for $b$ and for $g$ are the same, which proves the existence of $g$ with the required properties. 

From item 1, there exists a solution $h\in \H_z(\bar D(0,\rho))$ of Equation \eqref{diff1}. We obtain a function $g$ and a constant $\gamma$, as required in Equation \eqref{diff2}, by setting $h=(z-1)g+2\gamma$. Indeed, we have $h_z=g+(z-1)g_z$, hence 
$$
\begin{aligned}
b&=zh_z+\frac{1}{2}h\\
&=z(g+(z-1)g_z)+\frac{1}{2}((z-1)g+2\gamma)\\
&=z(z-1)g_z+\left(z+\frac{1}{2}z-\frac{1}{2}\right)g+\gamma.
\end{aligned}
$$
We now prove unicity. For any solution $(g,\gamma)$ of Equation \eqref{diff2}, 
$$
h=(z-1)g+2\gamma
$$ 
is a solution of Equation \eqref{diff1} by the above computation. So if $(g_1,\gamma_1)$ and $(g_2,\gamma_2)$ are solutions of Equation \eqref{diff2}, then we have 
$$
(z-1)g_1+2\gamma_1=(z-1)g_2+2\gamma_2
$$ 
by the unicity of the solution of Equation \eqref{diff1}. Hence, $(z-1)(g_1-g_2)$ is constant, so $g_1=g_2$, from which we deduce $\gamma_1=\gamma_2$. 
\end{proof}

\begin{remark}
The way to compute the constant $\gamma$ of item 2 in Lemma \ref{central0}, according to the proof we gave is: Compute $h$ from $b$ by finding the coefficients of $h$ around the point $z=0$ - apparently an infinite procedure - and then compute $\gamma =\frac{1}{2}h(1)$. In this sense our proof is not constructive. 
\end{remark}

\begin{notation}\label{notGH}
Given $\beta,\gamma\in \C$ and $h\in\H_z(\bar D(0,\rho))$, consider the function
$$
H_{\beta,\gamma,h}=\beta +\gamma (z-1)+\tilde f h_z+\frac{1}{2} \tilde f_zh
$$
in the variable $z$. 
\end{notation}

\begin{lemma}[Representation lemma]\label{central}
For any function $g\in \H_z(\bar D(0,\rho))$, there are unique constants $\beta$ and $\gamma$, and there is a unique function $h\in \H_z (\bar D(0,\rho))$, such that $g=H_{\beta,\gamma,h}$.
\end{lemma}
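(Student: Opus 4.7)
The plan is to reduce the equation $g = H_{\beta,\gamma,h}$ to Equation \eqref{diff2} of Lemma \ref{central0} by peeling off $\beta$ and then a common factor of $z-1$. The essential observation is the factorization
$$
\tilde f(z) = z(z-1)^2, \qquad \tilde f_z(z) = (z-1)(3z-1),
$$
so that both $\tilde f$ and $\tilde f_z$ vanish at $z=1$. Evaluating the identity $g = \beta + \gamma(z-1) + \tilde f h_z + \frac{1}{2}\tilde f_z h$ at $z=1$ therefore forces $\beta = g(1)$. This already pins down $\beta$ uniquely in terms of $g$.

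Having fixed $\beta := g(1)$, I would observe that $g - \beta \in \H_z(\bar D(0,\rho))$ vanishes at the point $z=1$ (which lies in $\bar D(0,\rho)$ since $\rho \geq 1$). Hence there is a unique $\tilde g \in \H_z(\bar D(0,\rho))$ with $g - \beta = (z-1)\tilde g$. Substituting the factorizations of $\tilde f$ and $\tilde f_z$, the equation $g = H_{\beta,\gamma,h}$ becomes
$$
(z-1)\tilde g = \gamma(z-1) + z(z-1)^2 h_z + \tfrac{1}{2}(z-1)(3z-1)h,
$$
and dividing through by the common factor $z-1$ yields
$$
z(z-1)h_z + \tfrac{1}{2}(3z-1)h = \tilde g - \gamma,
$$
which is exactly Equation \eqref{diff2} with right-hand side $\tilde g$.

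The existence and uniqueness of $\gamma \in \C$ and $h \in \H_z(\bar D(0,\rho))$ solving this last equation are then precisely the content of item 2 of Lemma \ref{central0}. Reading the reduction backwards produces the required representation $g = H_{\beta,\gamma,h}$, and uniqueness follows at each stage: $\beta$ is determined by evaluation at $z=1$, $\tilde g$ is determined by $\beta$ and $g$, and then $(\gamma,h)$ is determined by $\tilde g$ through Lemma \ref{central0}. I do not expect any serious obstacle here: the only thing to verify carefully is that dividing $g-\beta$ by $z-1$ stays inside $\H_z(\bar D(0,\rho))$ (a standard consequence of analyticity and the vanishing at $z=1$), after which everything is a direct appeal to the previously proved lemma.
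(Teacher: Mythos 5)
Your proposal is correct and follows essentially the same route as the paper: set $\beta = g(1)$ using the vanishing of $\tilde f$ and $\tilde f_z$ at $z=1$, divide $g-\beta$ by $z-1$, and invoke item 2 of Lemma \ref{central0} for the existence and uniqueness of $(\gamma,h)$. The only cosmetic difference is that the paper handles uniqueness by linearity (reducing to $H_{\beta,\gamma,h}=0$) rather than tracing it stage by stage, which amounts to the same thing.
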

\begin{proof}
Let $\beta=g(1)$. First observe that the function
$$
\frac{g-\beta}{z-1}
$$
lies in $\H_z(\bar D(0,\rho))$. By item 2 of Lemma \ref{central0}, applied to this function, the equation 
$$
z(z-1)h_z+\frac{1}{2}(3z-1)h=\frac{g-\beta}{z-1}-\gamma
$$
has a unique solution $(h,\gamma)$ with $h\in\H_z(\bar D(0,\rho))$ and $\gamma\in\C$. We have then:
$$
z(z-1)^2h_z+\frac{1}{2}(z-1)(3z-1)h=g-\beta-\gamma(z-1)
$$
hence
$$
\begin{aligned}
g&=\beta+\gamma(z-1)+z(z-1)^2h_z+\frac{1}{2}(3z^2-4z+1)h\\
&=\beta+\gamma(z-1)+\tilde f h_z+\frac{1}{2}\tilde f_z h.
\end{aligned}
$$
Therefore, we have $g=H_{\beta,\gamma,h}$. 

We prove the uniqueness of $\beta$, $\gamma$ and $h$. By linearity, it suffices to prove that if $H_{\beta,\gamma,h}=0$ then all  $\beta$, $\gamma$ and $h$ are equal to zero. Evaluating at $z=1$, we have immediately $\beta=0$, hence
$$
\gamma(z-1)+\tilde f h_z+\frac{1}{2}\tilde f_z h=0.
$$
We get
$$
\gamma(z-1)+z(z-1)^2 h_z+\frac{1}{2}(3z^2-4z+1)h=0
$$
hence
$$
z(z-1)h_z+\frac{1}{2}(3z-1)h=-\gamma.
$$
By item 2 of Lemma \ref{central0} (taking $b=0$), we deduce that $h=0$ and $\gamma=0$ (since $(0,0)$ is a solution, and the solution is uniquely determined by $b$). 
\end{proof}

\subsection{Uniformisation}\label{secuni}

Assume that $\delta+2$ is not a pole or a zero of $y$. Note that by Equation \eqref{MD}, $\delta+2$ is not a pole of $x$, hence the pair 
$$
(\tilde x,\tilde y)=(\left. x\right|_{\delta=-2},\left. y\right|_{\delta=-2})
$$ 
is well defined and satisfies the functional equation
\begin{equation}\label{tMD}\tag{$\widetilde{MD}$}
\tilde f\cdot \tilde y^2=\tilde f \circ\tilde x.
\end{equation}
Since $\delta+2$ is not a zero of $y$, $\tilde y$ is not the zero function, hence the quotient $\tilde x_z \tilde y^{-1}$ is well defined, and therefore is (trivially) equal to $\tilde A_{xy}=\left. A_{xy}\right|_{\delta=-2}$. In particular, it lies in $\H_z (\C)$. By Lemma \ref{central}, there exist unique constants $\beta,\gamma\in\C$ and a unique $h\in\H_z (\C)$ such that $\tilde A_{xy}=H_{\beta,\gamma,h}$. So a solution $(x,y)$ of Equation \eqref{MD} determines uniquely $\beta$, $\gamma$ and $h$. 

\begin{notation}\label{notG0}
Given $\beta,\gamma\in \C$ and $h\in\H_z(\C)$, consider the function 
$$
G_{\beta,\gamma,h}=\beta u+\gamma \tilde \xi+\twp' \cdot h\circ\twp
$$ 
of the variable $u$. 
\end{notation}

Note that $G_{\beta,\gamma,h}$ is uniquely determined by the pair $(x,y)$. 

\begin{notation}\label{notG}
When $\delta+2$ is not a zero or a pole of $y$, we will write 
$$
G_{xy}=G_{\beta,\gamma,h}.
$$
\end{notation}

\begin{lemma}\label{lemG}
For any $\beta,\gamma\in \C$ and $h\in\H_z(\C)$, we have
$$
G_{\beta,\gamma,h}'=H_{\beta,\gamma,h}\circ\twp.
$$
\end{lemma}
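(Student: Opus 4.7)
The plan is to carry out a direct computation, differentiating $G_{\beta,\gamma,h}$ term by term with respect to $u$ and then invoking the identities collected in Lemma \ref{properties} to recognise the result as $H_{\beta,\gamma,h}\circ\twp$.

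First I would differentiate each of the three summands of
$$
G_{\beta,\gamma,h}(u)=\beta u+\gamma\tilde\xi(u)+\twp'(u)\cdot h(\twp(u)).
$$
The first gives $\beta$. For the second, I would use the identity $\tilde\xi'=\twp-1$ from Lemma \ref{properties} to get $\gamma(\twp-1)$, which matches the middle term $\gamma(z-1)$ of $H_{\beta,\gamma,h}$ after substitution $z=\twp$. For the third summand, I would apply the product rule and the chain rule to obtain
$$
\bigl(\twp'\cdot h\circ\twp\bigr)'=\twp''\cdot(h\circ\twp)+(\twp')^2\cdot(h_z\circ\twp).
$$

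The key step is then to rewrite this using the functional equations from Lemma \ref{properties}: $(\twp')^2=\tilde f\circ\twp$ and $\twp''=\tfrac{1}{2}\tilde f_z\circ\twp$. Substituting these in yields
$$
\bigl(\twp'\cdot h\circ\twp\bigr)'=\tfrac{1}{2}(\tilde f_z\circ\twp)\cdot(h\circ\twp)+(\tilde f\circ\twp)\cdot(h_z\circ\twp),
$$
which is exactly the composition of the last two terms of $H_{\beta,\gamma,h}$, namely $\tilde f\, h_z+\tfrac{1}{2}\tilde f_z\, h$, with $\twp$. Collecting the three contributions gives
$$
G_{\beta,\gamma,h}'(u)=\beta+\gamma(\twp(u)-1)+(\tilde f\circ\twp)(h_z\circ\twp)+\tfrac{1}{2}(\tilde f_z\circ\twp)(h\circ\twp)=H_{\beta,\gamma,h}(\twp(u)),
$$
which is the claim.

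There is no genuine obstacle here; the lemma is essentially a bookkeeping identity that encodes how the differential operator $\tilde f\,\partial_z+\tfrac{1}{2}\tilde f_z$ on $\H_z(\C)$ pulls back under the parametrisation $z=\twp(u)$ to the ordinary derivative $\partial_u$ (after absorbing the $\beta$ and $\gamma$ terms via $u$ and $\tilde\xi$). The only point that merits care is keeping the chain rule straight when differentiating $h\circ\twp$ and correctly distinguishing $h_z$ (derivative with respect to $z$) from $(h\circ\twp)'$ (derivative with respect to $u$).
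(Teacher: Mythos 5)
Your computation is correct and is exactly the paper's proof: differentiate term by term, use $\tilde\xi'=\twp-1$, $(\twp')^2=\tilde f\circ\twp$ and $\twp''=\tfrac{1}{2}\tilde f_z\circ\twp$ from Lemma \ref{properties}, and collect terms. Nothing to add.
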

\begin{proof}
By Lemma \ref{properties}, we have:
$$
\begin{aligned}
G_{\beta,\gamma,h}'&=\beta +\gamma (\twp -1)+(\twp ')^2\cdot h_z\circ\twp+\twp''\cdot h \circ\twp\\
&=\beta +\gamma (\twp -1)+\tilde f\circ \twp\cdot  h_z\circ\twp+\frac{1}{2}\tilde f_z\circ \twp\cdot h\circ\twp.
\end{aligned}
$$
\end{proof}

Note that, by Lemma \ref{lemG}, we have
$$
G_{xy}'=G_{\beta,\gamma,h}'=H_{\beta,\gamma,h}\circ\twp=\tilde A_{xy}\circ\twp.
$$

Consider an element $\tilde s$, algebraic over $\C(z)$, satisfying $\tilde s^2=z(z-1)^2$, so that $(z,\tilde s)$ lies on the curve $\tilde E$. The pair $(\tilde x,\tilde s\tilde y)$ is a pair of non-constant functions, because if $\tilde x\in\C$, then $\tilde f \tilde y^2\in\C$, hence, since we assumed that $\tilde y$ is not the zero function, $\tilde f$ would be a square in $\M_z (\C)$, which is not the case. So the pair $(\tilde x,\tilde s\tilde y)$ may be seen as a map
$$
\begin{array}{ccc}
\tilde E&\longrightarrow&\tilde E\cup\{\infty\}\\
(z,\tilde s)&\longmapsto&(\tilde x(z),\tilde s\tilde y(z)).
\end{array}
$$
 
In the next lemma, we will prove that the following diagram
$$
\begin{CD}
\tilde E     @>(\tilde x,\tilde y)>>  \tilde E\cup\{\infty\}\\
@A(\twp,\twp')AA							     @AA(\twp,\twp')A\\
\C\setminus\tilde\Lambda @ >G_{xy}>>  \C\setminus\tilde\Lambda\\
\end{CD}
$$
is \emph{almost} commutative (in the sense of the next lemma). 

\begin{lemma}[Uniformisation Lemma]\label{uniform}
Assume that $\delta+2$ is not a zero or a pole of $y$. There exists $\mu\in\{0,\pi i\}$ such that for any $u\in \C\setminus\tilde\Lambda$ we have
$$
\tilde x \circ\twp (u)=\twp (G_{xy} (u)+\mu)\quad\textrm{and}\quad
\twp '(u)\cdot \tilde y \circ\twp (u)=\twp' (G_{xy} (u)+\mu).
$$
\end{lemma}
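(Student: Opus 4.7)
The strategy is to lift $P_1(u):=(\tilde x\circ\twp(u),\twp'(u)\cdot\tilde y\circ\twp(u))$ locally through the parametrisation $\tilde W$, show that the lift's derivative equals $G_{xy}'$, extend the resulting identities to all of $\C\setminus\tilde\Lambda$ via the identity principle, and then use the parity symmetry $u\leftrightarrow -u$ together with the fibre structure of $\tilde W$ to pin $\mu$ down to $\{0,\pi i\}$.

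First, $P_1$ takes values in $\tilde E\cup\{\infty\}$ by \eqref{tMD} combined with $(\twp')^2=\tilde f\circ\twp$, while $P_2(u):=\tilde W(G_{xy}(u))$ does so by Lemma \ref{properties}. The hypothesis $\tilde y\not\equiv 0$ forces $\tilde x$ to be non-constant: a constant value $X_0$ would, via \eqref{tMD}, make $\tilde y^2=\tilde f(X_0)/\tilde f$ a meromorphic square, which is impossible (for $X_0\in\{0,1\}$ it gives $\tilde y\equiv 0$; otherwise $\tilde y^2$ would have odd order at $z=0$). Consequently $\tilde A_{xy}\not\equiv 0$ and $G_{xy}$ is non-constant. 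Also, a direct computation shows $\twp(v)\ne 1$ for every $v\in\C\setminus\tilde\Lambda$, so $P_2$ avoids the singular point $(1,0)$ and $P_1(u)=(1,0)$ occurs only on a discrete subset of $\C\setminus\tilde\Lambda$. Pick $u_0\in\C\setminus\tilde\Lambda$ outside this discrete set and off the zero set of $\tilde y\circ\twp$. Since the differential $(\twp',\twp'')$ of $\tilde W$ never vanishes (when $\twp'(v)=0$ one has $\twp(v)=0$, so $\twp''(v)=\tfrac{1}{2}\tilde f_z(0)=\tfrac{1}{2}\ne 0$), $\tilde W$ restricts to a local biholomorphism onto $\tilde E_{\mathrm{smooth}}\cup\{\infty\}$, and we obtain a holomorphic lift $\ell_1$ of $P_1$ on a neighbourhood $U$ of $u_0$, characterised by $\twp\circ\ell_1=\tilde x\circ\twp$ and $\twp'\circ\ell_1=\twp'\cdot\tilde y\circ\twp$. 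Differentiating the first relation and using the second to eliminate $\twp'\circ\ell_1$ yields
$$\ell_1'\;=\;\frac{\tilde x_z\circ\twp}{\tilde y\circ\twp}\;=\;\tilde A_{xy}\circ\twp\;=\;G_{xy}'$$
on $U$ by Lemma \ref{lemG}, so $\ell_1=G_{xy}+c$ on $U$ for some $c\in\C$. Substituting, the two identities
$$\twp(G_{xy}(u)+c)=\tilde x\circ\twp(u),\qquad\twp'(G_{xy}(u)+c)=\twp'(u)\cdot\tilde y\circ\twp(u)$$
hold on $U$; since both sides of each are meromorphic on the connected open set $\C\setminus\tilde\Lambda$ (note that $\tilde\xi$, $\twp$, $\twp'$ and hence $G_{xy}$ are holomorphic there), the identity principle extends them to all of $\C\setminus\tilde\Lambda$.

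For the key parity step, the formulas immediately give $\twp(-u)=\twp(u)$, $\twp'(-u)=-\twp'(u)$ and $\tilde\xi(-u)=-\tilde\xi(u)$, whence $G_{xy}$ is odd: $G_{xy}(-u)=-G_{xy}(u)$. Replacing $u$ by $-u$ in the two global identities and using these parities transforms them into
$$\twp(G_{xy}(u)-c)=\twp(G_{xy}(u)+c),\qquad\twp'(G_{xy}(u)-c)=\twp'(G_{xy}(u)+c),$$
i.e.\ $\tilde W(G_{xy}(u)-c)=\tilde W(G_{xy}(u)+c)$ as an equality of meromorphic maps. By the fibre description in Lemma \ref{properties}, evaluating at any $u$ for which neither $G_{xy}(u)\pm c$ lies in $\tilde\Lambda$ (such $u$ exist since $G_{xy}$ is non-constant) yields $(G_{xy}(u)+c)-(G_{xy}(u)-c)=2c\in\tilde\Lambda=2\pi i\Z$. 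Hence $c\in\pi i\Z$, and by the $\tilde\Lambda$-periodicity of $\twp$ and $\twp'$ we may replace $c$ by its representative $\mu\in\{0,\pi i\}$ modulo $\tilde\Lambda$, which concludes the proof. The main obstacle is the parity step: one must recognise that the specific antiderivative $\tilde\xi$ of $\twp-1$ built into the definition of $G_{xy}$ makes $G_{xy}$ odd, and combine this with the inversion symmetry $P_1(-u)=\ominus P_1(u)$ on $\tilde E$ (coming from $\twp$ even and $\twp'$ odd) to cut the a priori free constant $c\in\C$ down to the discrete two-element set $\{0,\pi i\}\pmod{\tilde\Lambda}$.
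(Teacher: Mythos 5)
Your proof is correct and follows the same overall architecture as the paper's: a local lift of $(\tilde x\circ\twp,\,\twp'\cdot\tilde y\circ\twp)$ through $(\twp,\twp')$, the computation showing the lift's derivative equals $G_{xy}'$ so that the lift is $G_{xy}+c$ for a constant $c$, global extension by the identity principle, and the parities of $G_{xy}$, $\twp$, $\twp'$ to constrain $c$. The one place where you genuinely diverge is the concluding step. The paper rewrites the two global identities as the statement that $(\hat x,\twp'\hat y)\ominus(\twp\circ G_{xy},\twp'\circ G_{xy})$ is a constant point of $\tilde E$, invokes Lemma \ref{lawtransfer} to see that this point is odd under $u\mapsto-u$ and hence of order at most $2$, and uses that $(0,0)=\tilde W(\pi i)$ is the only $2$-torsion point of $\tilde E$. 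You avoid the group law entirely: substituting $-u$ and using the parities directly yields $\tilde W(G_{xy}(u)-c)=\tilde W(G_{xy}(u)+c)$, and the fibre description in item 3 of Lemma \ref{properties} gives $2c\in\tilde\Lambda$ at once, hence $c\equiv 0$ or $\pi i$ modulo the period. Your route is slightly more economical --- it needs neither Lemma \ref{lawtransfer} nor the discussion of torsion on the singular cubic, and it sidesteps the (harmless but unaddressed) possibility that the constant difference point is $\infty$ or the excluded node $(1,0)$. You also supply details the paper leaves implicit or places outside the proof (non-constancy of $\tilde x$, non-vanishing of the differential of $\tilde W$, avoidance of $(1,0)$, existence of $u$ with $G_{xy}(u)\pm c\notin\tilde\Lambda$), and these all check out.
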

\begin{proof}
Write $\hat x =\tilde x \circ\twp$ and $\hat y=\tilde y \circ\twp$. Since $\tilde x$ and $\tilde y$ are non-constant, there is a point $u_0\in \C\setminus\tilde\Lambda$ such that 
$$
(\hat x (u_0),\twp '(u_0) \hat y (u_0)),
$$
is finite and such that the Jacobian matrix of the function $(\twp , \twp ' )$  does not vanish at $u_0$. By the Implicit Function Theorem for analytic functions there is a function $G_0$ of the variable $u$, analytic on an open neighborhood of $u_0$, such that for each $u$ in that neighborhood we have 
$$
\hat x =\twp \circ G_0\qquad\textrm{and} \qquad \twp ' \hat y =\twp '\circ G_0.
$$ 
Since
$$
\begin{aligned}
G_{xy}'&=\tilde A_{xy}\circ \twp =\frac{\tilde x_z}{\tilde y}\circ \twp =
\frac{\hat x'}{\twp'  \hat y}
=\frac{G_0'\cdot \twp'\circ G_0}{\twp'\circ G_0}=G_0',
\end{aligned}
$$
the function $G_{xy}-G_0$ is equal to a constant $\mu$, so that we have
$$
\hat x=\twp \circ(G_{xy} -\mu)\quad\textrm{and}\quad
\twp'  \hat y =\twp ' \circ(G_{xy} -\mu ).
$$    
By Lemma \ref{lawtransfer}, we have
$$
(\hat x,\twp'  \hat y)=(\twp \circ(G_{xy} -\mu ),\twp ' \circ(G_{xy} -\mu ))=
(\twp\circ G_{xy},\twp ' \circ G_{xy})\ominus(\twp(\mu),\twp'(\mu)),
$$
so that the function
$$
(\hat x,\twp'  \hat y)\ominus (\twp \circ G_{xy},\twp ' \circ G_{xy})
$$
is constant. This is true on the domain of definition of the function $G_0$, so by the local-to-global property of meromorphic functions, this holds for any $u$ in $\C\setminus\tilde\Lambda$. 

 We claim that this constant must be a point of order $2$ on $\tilde E$. Indeed, observe that $G_{xy}$ is an odd function of $u$, because $\tilde\xi$ and $\twp'$ are odd and $\twp$ is even. Therefore, the transformation $u\mapsto -u$ maps the point $(\twp\circ G_{xy}(u),\twp' \circ G_{xy}(u))$ to its negative on $\tilde E$. The same happens with $(\hat x,\twp'  \hat y)=(\tilde x \circ\twp,\twp' \cdot  \tilde y \circ\twp)$. So, the difference $(\hat x,\twp'  \hat y)\ominus (\twp \circ G_{xy},\twp ' \circ G_{xy})$ is an odd function with respect to the group law of $\tilde E$, and since it is constant, it has to be a point of order at most $2$ on $\tilde E$. Therefore, one can take $\mu\in\{0,\pi i\}$, because the only point of order $2$ on $\tilde E$ is $(0,0)=(\twp(\pi i),\twp'(\pi i))$.
\end{proof}

\subsection{Integral values}\label{intval}

The Uniformisation Lemma \ref{uniform} gives us a first access to the integers. 

\begin{lemma}\label{intvallem}
Assume that $\delta+2$ is not a pole or a zero of $y$. The quantity $\left. A_{xy}\right|_{z=1,\delta=-2}=\left. A_{xy}\right|_{\delta=-2,z=1}$ is a rational integer.
\end{lemma}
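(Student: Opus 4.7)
My plan is to exploit the Uniformisation Lemma \ref{uniform} and the periodicity of $\twp, \twp', \tilde\xi$ to squeeze the constant $\beta$ of Lemma \ref{central} into the lattice $\tilde\Lambda = 2\pi i \Z$, after scaling by $\tilde\omega$, thereby forcing $\beta \in \Z$. The conclusion then reduces to observing that $\left.A_{xy}\right|_{z=1,\delta=-2}$ equals $\beta$.

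First, I would unpack what the constant $\beta$ really is. By Lemma \ref{central}, the decomposition $\tilde A_{xy} = H_{\beta,\gamma,h}$ with $\beta,\gamma \in \C$ and $h \in \H_z(\C)$ is unique. Since $\tilde f(z) = z(z-1)^2$ and $\tilde f_z(z) = (z-1)(3z-1)$ both vanish at $z=1$, evaluating yields $\left.\tilde A_{xy}\right|_{z=1} = \beta$. Because $A_{xy} \in \H$ by Lemma \ref{xavier1}, the order of evaluation is irrelevant and we get $\left.A_{xy}\right|_{z=1,\delta=-2} = \left.A_{xy}\right|_{\delta=-2,z=1} = \beta$. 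So the entire task is to show $\beta \in \Z$.

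Next, I would apply Lemma \ref{uniform}: there exists $\mu \in \{0,\pi i\}$ such that for every $u \in \C\setminus\tilde\Lambda$,
$$
\tilde W(G_{xy}(u) + \mu) = \bigl(\tilde x \circ \twp(u),\; \twp'(u) \cdot \tilde y \circ \twp(u)\bigr).
$$
The right-hand side is invariant under the substitution $u \mapsto u + \tilde\omega$ because $\twp$ and $\twp'$ are $\tilde\omega$-periodic by item 3 of Lemma \ref{properties}. Hence so is the left-hand side, and the same item of Lemma \ref{properties} (the injectivity-mod-$\tilde\Lambda$ clause) gives
$$
G_{xy}(u+\tilde\omega) - G_{xy}(u) \in \tilde\Lambda \qquad \text{for all } u \in \C \setminus \tilde\Lambda.
$$
On the other hand, a direct calculation using the $\tilde\omega$-periodicity of $\twp$, $\twp'$ and $\tilde\xi$ gives
$$
G_{xy}(u+\tilde\omega) - G_{xy}(u) = \beta\tilde\omega + \gamma\bigl(\tilde\xi(u+\tilde\omega) - \tilde\xi(u)\bigr) + \bigl[\twp' h\!\circ\!\twp\bigr](u+\tilde\omega) - \bigl[\twp' h\!\circ\!\twp\bigr](u) = \beta\tilde\omega.
$$
Combining the two displays yields $\beta\tilde\omega \in \tilde\Lambda = \Z\tilde\omega$, i.e.\ $\beta \in \Z$, which completes the proof.

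The steps are all essentially bookkeeping once the Uniformisation Lemma is available; I do not foresee a genuine obstacle. The only point worth verifying carefully is that the constant $\mu$ produced by Lemma \ref{uniform} is independent of $u$ (which it is, by construction), so that it does not interfere with the periodicity argument applied to $G_{xy}(u+\tilde\omega) + \mu$ versus $G_{xy}(u) + \mu$. The key conceptual content is that among the three ``building blocks'' $\beta u$, $\gamma\tilde\xi$, $\twp' \cdot h\circ\twp$ of $G_{xy}$, only the linear term $\beta u$ fails to be $\tilde\omega$-periodic, and its monodromy $\beta\tilde\omega$ is forced to be a lattice element.
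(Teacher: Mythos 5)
Your proposal is correct and follows essentially the same route as the paper: use the Uniformisation Lemma plus the $\tilde\omega$-periodicity of $\twp$, $\twp'$, $\tilde\xi$ and the injectivity-mod-$\tilde\Lambda$ clause of Lemma \ref{properties} to force $\beta\tilde\omega\in\tilde\Lambda$, hence $\beta\in\Z$. Your opening paragraph making explicit that $\left.A_{xy}\right|_{z=1,\delta=-2}=\beta$ (since $\tilde f$, $\tilde f_z$ and $z-1$ all vanish at $z=1$, and $A_{xy}\in\H$ so the order of evaluation is immaterial) is a detail the paper relegates to its proof sketch, and is a welcome addition.
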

\begin{proof}
Because $\twp$ and $\twp'$ are periodic of period $\tilde\Lambda$ (see Lemma \ref{properties}), and thanks to the Uniformisation Lemma \ref{uniform}, there exists $\mu\in\{0,2\pi i\}$ such that, for any $\lambda\in\tilde\Lambda$ and for any $u\in\C\setminus\tilde\Lambda$, we have
$$
\twp (G_{xy} (u+\lambda)+\mu)=\tilde x \circ\twp (u+\lambda)=
\tilde x \circ\twp (u)=\twp (G_{xy}(u)+\mu)
$$
and
$$
\twp' (G_{xy}(u+\lambda)+\mu)=\twp'(u+\lambda)\cdot \tilde y \circ\twp (u+\lambda)=
\twp'(u)\cdot\tilde y \circ\twp (u)=\twp' (G_{xy}(u)+\mu).
$$
Hence $G_{xy}(u+\lambda)+\mu$ is congruent to $G_{xy}(u)+\mu$ modulo $\tilde\Lambda$ (by Lemma \ref{properties}, item \ref{propertiesper}), so $G_{xy}(u+\lambda)$ is congruent to $G_{xy}(u)$ modulo $\tilde\Lambda$.

Let $\beta$, $\gamma$ and $h$ be such that $G_{xy}=G_{\beta,\gamma,h}$. Since 
$$
G_{xy}(u)=\beta u+\gamma\tilde\xi(u)+\twp(u)\cdot h\circ\twp(u),
$$ 
and $\twp$, $\twp'$ and $\tilde\xi$ are periodic of period $\tilde\Lambda$, we have 
$$
\beta\lambda=\beta (u+\lambda)-\beta u=G_{xy}(u+\lambda)-G_{xy}(u)\in\tilde\Lambda.
$$
Hence $\beta$ is an integer. 
\end{proof}

\section{Integrality of $\frac{x-1}{(z-1)y}$ at a singular point}\label{obtintegers}

We can now obtain the integers from the function $\alpha_{xy}$ in a diophantine way, which is what we will need for the logical conclusions. Indeed, we will show that the odd integers are values of  $\alpha_{xy}$ at $(z,\delta )=(1,-2)$.

Given $\delta\in\C$, let $s$ be a square root of $f(\delta,z)$ in an algebraic closure of $\M$. Consider $(z,s)$ as a point on $E_\delta$. It is well known that for each $n\in\Z$, we have 
$$
n(z,s)=\overbrace{(z,s)\oplus\dots\oplus(z,s)}^{n\textrm{ times}}=(x_n(z,\delta),sy_n(z,\delta))
$$ 
for some rational functions $x_n$ and $y_n$ over $\Q$. We now prove some special properties of the maps $x_n$ and $y_n$. 

\begin{lemma}\label{wellknown}
For any $\delta\notin\{-2,2\}$ and for any $n\in\Z\setminus\{0\}$, we have 
$$
\frac{\partial x_n}{\partial z}=ny_n.
$$ 
\end{lemma}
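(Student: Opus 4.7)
The plan is to use the classical uniformization of the elliptic curve $E_\delta$ by a Weierstrass $\wp$-function, in direct analogy with what was done for $\tilde E$ in Section \ref{secparam}. Fix $\delta\notin\{-2,2\}$, so that $E_\delta\colon Y^2=f(\delta,X)$ is a smooth elliptic curve over $\C$. There is a lattice $\Lambda_\delta\subset\C$ and a meromorphic parametrisation $u\mapsto(\wp_\delta(u),\wp_\delta'(u))$ which is a surjective group homomorphism from $(\C/\Lambda_\delta,+)$ onto $(E_\delta\cup\{\infty\},\oplus)$. A priori this requires matching our Weierstrass form $Y^2=X^3+\delta X^2+X$ with the standard form $Y^2=4X^3-g_2X-g_3$, but this is only an affine change of coordinates and does not affect the derivation below.

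Given this, if $(z,s)$ is a generic point of $E_\delta$ parametrised so that $z=\wp_\delta(u)$ and $s=\wp_\delta'(u)$, then the fact that the parametrisation is a group homomorphism gives $n(z,s)=(\wp_\delta(nu),\wp_\delta'(nu))$. Comparing with $n(z,s)=(x_n(z,\delta),sy_n(z,\delta))$ yields the two identities
$$
x_n(\wp_\delta(u),\delta)=\wp_\delta(nu)
\qquad\textrm{and}\qquad
\wp_\delta'(u)\cdot y_n(\wp_\delta(u),\delta)=\wp_\delta'(nu).
$$
Differentiating the first with respect to $u$ (with $\delta$ held fixed) by the chain rule gives
$$
\frac{\partial x_n}{\partial z}(\wp_\delta(u),\delta)\cdot\wp_\delta'(u)=n\wp_\delta'(nu)=n\cdot\wp_\delta'(u)\cdot y_n(\wp_\delta(u),\delta).
$$
Dividing by $\wp_\delta'(u)$, which is nonzero away from the $2$-torsion points, yields $\partial x_n/\partial z=ny_n$ on a Zariski-dense set of values. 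Since both sides are rational functions in $(z,\delta)$, the identity holds globally as rational functions over $\Q$ for each $\delta\notin\{-2,2\}$. The cases $n=\pm 1$ (where $(x_{\pm 1},y_{\pm 1})=(z,\pm 1)$) serve as trivial sanity checks.

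I expect the main technical point to be justifying the uniformisation of $E_\delta$ as a complex Lie group by $\C/\Lambda_\delta$ with group law compatible with $\oplus$; this is classical but uses elliptic curve theory in an essential way. If a self-contained argument is preferred, one may instead proceed by induction on $|n|$ using the addition formulas from Section \ref{secparam} (case $4$ with $P_1=(x_n,sy_n)$ and $P_2=(z,s)$), together with the relation $2s\cdot s_z=f_z(\delta,z)$ obtained by differentiating $s^2=f(\delta,z)$; the inductive step, starting from the equivalent form $x_{n+1}=m^2-\delta-x_n-z$, would be elementary but computationally tedious. An intermediate route is to invoke directly the translation invariance of $\omega=dX/Y$ on $E_\delta$ and the identity $[n]^*\omega=n\omega$, which at the generic point $(z,s)$ (taking $z$ as local parameter and noting that $\partial\delta=0$) reads $(\partial x_n/\partial z)\,dz/(sy_n)=n\,dz/s$, yielding the result immediately.
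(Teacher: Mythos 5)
Your argument is correct. Note that the paper does not actually prove Lemma \ref{wellknown}: it simply cites \cite[Lemma 3.1]{Vidaux03}, so your proposal supplies a genuine argument where the paper defers to a reference. What you write is the standard proof: uniformise $E_\delta$ by a translated Weierstrass function $\wp_\delta$ so that $(z,s)=(\wp_\delta(u),\wp_\delta'(u))$, use that multiplication by $n$ on the curve corresponds to $u\mapsto nu$, and differentiate $x_n(\wp_\delta(u),\delta)=\wp_\delta(nu)$ in $u$; dividing by $\wp_\delta'(u)$ off the $2$-torsion and invoking that both sides are rational in $z$ gives the identity everywhere. The sanity checks work ($n=-1$ gives $x_{-1}=z$, $y_{-1}=-1$, and indeed $1=(-1)(-1)$), and your third route via the invariant differential $\omega=dX/Y$ and $[n]^*\omega=n\omega$ is the cleanest and essentially the one used in the cited reference; it avoids any transcendental input beyond the translation invariance of $\omega$, which is itself provable algebraically. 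The only caveat worth recording is the one you already flag: the parametrisation must be adjusted by an affine change of coordinates to match the non-standard Weierstrass form $Y^2=X^3+\delta X^2+X$, and one should check that the resulting $(\,p,p')$ still satisfies $(p')^2=f(\delta,p)$ with the chord--tangent law transported from $\C/\Lambda_\delta$ --- exactly as the paper does for the degenerate curve $\tilde E$ in Section \ref{secparam} and Lemma \ref{lawtransfer}.
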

\begin{proof}
See, for example, \cite[Lemma 3.1]{Vidaux03}.
\end{proof}

\begin{lemma}\label{groupLaw} 
For $\delta\notin\{-2,2\}$ and for any $n\in \Z\setminus \{ 0\}$, we have: 
\begin{enumerate}
\item 
$$
\ord_{z-1}(x_n-1)=
\begin{cases}
1&\textrm{if $n$ is odd,}\\
-2&\textrm{if $n\in4\Z$,}\\
0&\textrm{if $n\in4\Z+2$}.
\end{cases}
$$
\item 
$$
\left. \alpha_{x_ny_n}\right|_{z=1}=
\begin{cases}
n&\textrm{if $n$ is odd,}\\
-\frac{n}{2}&\textrm{if $n\in4\Z$,}\\
0&\textrm{if $n\in 4\Z+2$}.
\end{cases}
$$ 
\end{enumerate}
\end{lemma}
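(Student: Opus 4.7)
My plan is to read off everything from the orbit of the distinguished point $P_0 := (1,\sqrt{\delta+2})$ on $E_\delta$ under multiplication by $n$, combined with the functional identity $A_{x_n y_n}=n$ coming from Lemma \ref{wellknown}. The very first step is to check that $P_0$ is a $4$-torsion point of $E_\delta$: the tangent slope at $P_0$ is
$$
m=\frac{3+2\delta+1}{2\sqrt{\delta+2}}=\sqrt{\delta+2},
$$
so the doubling formulas give $x_{2P_0}=m^2-\delta-2=0$ and $y_{2P_0}=-\sqrt{\delta+2}-m(0-1)=0$. Thus $2P_0=(0,0)$ (the unique affine $2$-torsion point of $E_\delta$), whence $4P_0=\infty$, and accordingly
$$
nP_0=\begin{cases}\pm P_0 & \text{if } n\text{ is odd,}\\ (0,0) & \text{if } n\in 4\Z+2,\\ \infty & \text{if } n\in 4\Z.\end{cases}
$$

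For part~(1), I would exploit that in characteristic~$0$ the isogeny $[n]\colon E_\delta\to E_\delta$ is everywhere unramified, and that the $x$-coordinate projection $\pi\colon E_\delta\to \P^1$ is unramified at $P_0$ (since $y(P_0)=\sqrt{\delta+2}\neq 0$ for $\delta\notin\{-2,2\}$). Writing $x_n\circ\pi = x\circ[n]$ as functions on $E_\delta$ and comparing orders at $P_0$ gives
$$
\ord_{z-1}(x_n-1)=\ord_{P_0}\bigl((x-1)\circ[n]\bigr)=\ord_{nP_0}(x-1).
$$
Since the divisor of $x-1$ on $E_\delta$ is $P_0+(-P_0)-2\cdot\infty$, the right-hand side equals $1$ when $nP_0=\pm P_0$, equals $-2$ when $nP_0=\infty$, and equals $0$ at any other affine point of $E_\delta$ --- in particular at $(0,0)$ --- matching the three cases claimed.

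For part~(2), the key rearrangement is
$$
\alpha_{x_n y_n}=\frac{x_n-1}{(z-1)y_n}=n\cdot\frac{x_n-1}{(z-1)\,\partial x_n/\partial z},
$$
which uses $y_n=(\partial x_n/\partial z)/n$ from Lemma~\ref{wellknown}. If $k:=\ord_{z-1}(x_n-1)\ne 0$, then matching leading terms of the Laurent expansions of $x_n-1$ and $(z-1)\,\partial x_n/\partial z$ at $z=1$ (exactly as in the proof of Lemma~\ref{alphaA}) yields
$$
\left.\frac{x_n-1}{(z-1)\,\partial x_n/\partial z}\right|_{z=1}=\frac{1}{k},
$$
so $\alpha_{x_n y_n}|_{z=1}=n/k$; plugging in $k=1$ for odd $n$ and $k=-2$ for $n\in 4\Z$ recovers the claimed values $n$ and $-n/2$.

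The main obstacle I anticipate is the remaining row $n\in 4\Z+2$, where $k=0$ and the formula $n/k$ degenerates. Here $x_n-1$ is a unit at $z=1$ of value $-1$, while Equation \eqref{MD} together with $\ord_{z-1}(x_n)=2$ (forced by the double zero of $x$ at $(0,0)\in E_\delta$) forces $\ord_{z-1}(y_n)=1$; a direct count then gives $\ord_{z-1}(\alpha_{x_n y_n})=-2$, so $\alpha_{x_n y_n}$ has a genuine pole of order $2$ at $z=1$. Reconciling this with the stated value $0$ is the delicate point, presumably to be interpreted in the convention of Lemma~\ref{alphaA} where $\alpha|_{z=1}$ acts as a symbolic marker on the non-defined case rather than an honest evaluation; in any case the downstream use in Section~\ref{Logic} only requires the odd-$n$ row, so the diophantine conclusion of Theorem~\ref{mainLogic} does not depend on the interpretation chosen for $n\in 4\Z+2$.
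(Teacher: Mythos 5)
Your argument is correct, and for part (1) it takes a genuinely different route from the paper's. Both proofs start the same way, by checking that $P_0=(1,\sqrt{\delta+2})$ satisfies $2P_0=(0,0)$ and hence has exact order $4$, which splits $n$ into the three congruence classes mod $4$. To extract the orders in part (1), the paper stays with the rational functions themselves: combining Lemma \ref{wellknown} with Equation \eqref{MD} gives $\bigl(\tfrac{1}{n}\tfrac{\partial x_n}{\partial z}\bigr)^2=f(\delta,x_n)$, and the order of $x_n-1$ at $z=1$ is read off from that differential identity case by case (e.g.\ $2(\ell-1)=3\ell$ in the pole case, forcing $\ell=-2$). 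You instead invoke the unramifiedness of $[n]$ in characteristic $0$ and of the $x$-projection at $P_0$ (legitimate since $Y(P_0)\neq 0$ for $\delta\notin\{-2,2\}$), and pull back the divisor $(P_0)+(-P_0)-2(\infty)$ of $X-1$; this gives all three orders in one uniform computation, at the cost of importing standard facts about isogenies that the paper's more elementary manipulation avoids. Part (2) is the same logarithmic-derivative argument as in the paper.

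Your hesitation over the row $n\in 4\Z+2$ is well founded, and the difficulty lies in the paper's statement rather than in your computation: there $\ord_{z-1}(x_n-1)=0$ while $\ord_{z-1}(x_n)=2$ and hence $\ord_{z-1}(y_n)=1$, so $\alpha_{x_ny_n}$ has a pole of order $2$ at $z=1$ and the tabulated value $0$ cannot be an honest evaluation. The paper's own proof only establishes $\bigl.\tfrac{1}{\alpha_{x_ny_n}}\bigr|_{z=1}=0$ in that case, and Lemma \ref{alphaA} likewise treats $\alpha|_{z=1,\delta=-2}$ as undefined when the order of $x-1$ is $0$; so that entry should read ``undefined'' (equivalently, $1/\alpha_{x_ny_n}|_{z=1}=0$). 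As you observe, only the odd-$n$ row is used in Corollary \ref{Sofia} and Section \ref{Logic}, so nothing downstream is affected.
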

\begin{proof}
\begin{enumerate}
\item From the addition law, we have 
$$
x_2=\frac{(z^2-1)^2}{4f(\delta,z)}.
$$ 
Therefore, $(0,0)$ is a point of order $2$ on $E_\delta$ and $2(1,f(\delta,1))=(0,0)$, so $P_0=(1,f(\delta,1))$ is a point of order $4$. Hence, for an arbitrary $n\in\Z\setminus\{0\}$, we have the cases: 
\begin{itemize}
\item[(a)] $n\cdot P_0=\pm P_0$ for $n$ odd.
\item[(b)] $n\cdot P_0=(0,0)$ for $n\equiv 2\mbox{ mod }4$. 
\item[(c)] $n\cdot P_0=\infty$ for $n\equiv 0\mbox{ mod }4$. 
\end{itemize}
Note that the first coordinate of $nP_0$ is $x_n(1)$, and that we have
\begin{equation}\label{eqoddevenmod4}
\left(\frac{1}{n}\frac{\partial x_n}{\partial z}\right)^2=f(\delta,x_n)
\end{equation} 
(from Equation \eqref{MD} and Lemma \ref{wellknown}). 

If $n$ is odd, then $z-1$ is a zero of $x_n-1$ by item (a), hence it is not a zero of $f(\delta,x_n)=x_n^3+\delta x_n^2+x_n$, so neither it is a zero of $\frac{\partial x_n}{\partial z}=\frac{\partial (x_n-1)}{\partial z}$ by Equation \eqref{eqoddevenmod4}. In particular, it is a zero of multiplicity $1$ of $x_n-1$. 

If $n\in4\Z$, then $z-1$ is a pole of $x_n$ by item (c). Write $\ell$ for the order of this pole. From Equation \eqref{eqoddevenmod4} we have $2(\ell-1)=3\ell$, hence $\ell=-2$.

If $n\in4\Z+2$, then $z-1$ is a zero of $x_n$ by item (b), so it is not a zero of $x_n-1$. 

\item Observe that 
$$
\frac{y_n}{x_n-1}=\frac{1}{n}\frac{\frac{\partial (x_n-1)}{\partial z}}{x_n-1}
$$
by Lemma \ref{wellknown}, hence 
$$
\left. \frac{1}{\alpha_{x_ny_n}}\right|_{z=1}=\left. \frac{(z-1)y_n}{x_n-1}\right|_{z=1}
=\left. \frac{1}{n}(z-1)\frac{\frac{\partial (x_n-1)}{\partial z}}{x_n-1}\right|_{z=1}
$$ 
is $1/n$ times the order at $z=1$ of $x_n-1$ (by general properties of logarithmic derivatives). 
\end{enumerate}
\end{proof}

The next three lemmas deal with the behaviour of $x_n$ and $y_n$ at $\delta+2$. 

\begin{lemma}\label{Liz}
For any non-zero integer $n$, we have $A_{x_ny_n}=n$. 
\end{lemma}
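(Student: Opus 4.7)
The plan is to deduce this immediately from Lemma \ref{wellknown} by a Zariski-density argument. Recall that $A_{xy}=x_z/y$ is defined for any pair $(x,y)$ of meromorphic functions with $y\ne 0$ in $\M$. Since $n\ne 0$, the function $y_n$ is a nonzero rational function in $\C(\delta,z)$ (the multiples $n(z,s)$ for $n\ne 0$ are not the identity of the elliptic curve over the generic fiber), so $A_{x_ny_n}=(x_n)_z/y_n$ is a well-defined element of $\C(\delta,z)\subset\M$.

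Now by Lemma \ref{wellknown}, for every fixed $\delta\in\C\setminus\{-2,2\}$ the identity $\frac{\partial x_n}{\partial z}=ny_n$ holds in the variable $z$. Since both $(x_n)_z$ and $ny_n$ are rational functions in the two variables $(z,\delta)$, and since the set $\{(z,\delta)\colon \delta\notin\{-2,2\}\}$ is Zariski-dense in $\C^2$, these two rational functions must coincide on all of $\C(\delta,z)$. Therefore
$$
A_{x_ny_n}=\frac{(x_n)_z}{y_n}=\frac{ny_n}{y_n}=n
$$
as an equality in $\M$, which is what we wanted.

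There is essentially no obstacle; the only thing to verify carefully is that the identity from Lemma \ref{wellknown} — which is stated for each specific value of $\delta$ avoiding the singular fibers $\delta=\pm 2$ — upgrades to an identity of rational functions in $(z,\delta)$. This is automatic from the Zariski density of $\{\delta\ne\pm 2\}$ in $\C$, combined with the fact that $x_n,y_n\in\Q(\delta,z)$ (so in particular their partial derivatives are again rational functions).
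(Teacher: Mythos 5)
Your proof is correct and follows essentially the same route as the paper: both reduce the statement to Lemma \ref{wellknown} and then extend the resulting identity $A_{x_ny_n}=n$ past the excluded fibers $\delta=\pm 2$. The only difference is the continuation mechanism --- the paper invokes the analyticity of $A_{x_ny_n}$ (Lemma \ref{xavier1}), whereas you use the rationality of $x_n,y_n$ together with Zariski density, which works just as well for these particular solutions.
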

\begin{proof}
By Lemma \ref{wellknown}, $A_{x_ny_n}$ is the constant function $n$ whenever $\delta\ne\pm 2$. Since $A_{x_ny_n}$ is analytic everywhere by Lemma \ref{xavier1}, this is still true at $\delta=-2$. 
\end{proof}

\begin{lemma}\label{productformula}
Let $k,n\in\Z$ be such that $n\ge2$, and $1\le k<n$. We have
\begin{equation}\label{product}
x_{n+k} x_{n-k}=\frac{(x_kx_n-1)^2}{(x_n-x_k)^2}
\end{equation}
and
\begin{equation}\label{x2}
x_{2n}=\frac{(x_n-x_n^{-1})^2}{4(x_n+\delta +x_n^{-1})},
\end{equation}
and the same formulas hold true for $\tilde x_n$, $\tilde x_{n-k}$, $\tilde x_{n+k}$ and $\tilde x_{2n}$. 
\end{lemma}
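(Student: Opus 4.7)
Both identities are classical formulas for the elliptic curve $E_\delta\colon Y^2 = X^3 + \delta X^2 + X$, applied to $P = nP_0$ and $Q = kP_0$, where $P_0 = (z,s)$ with $s^2 = f(\delta,z)$. Since $nP_0 = (x_n, sy_n)$ by construction, and $(sy_n)^2 = f(\delta, x_n)$ by \eqref{MD} (and likewise for $kP_0$), the $y$-coordinates enter the addition formulas only through $f(\delta, x_n)$ and $f(\delta, x_k)$. It therefore suffices to prove that for any two affine points $P_i = (a_i, b_i) \in E_\delta$ with $\delta \neq \pm 2$,
\begin{equation*}
x(P_1 \oplus P_2)\, x(P_1 \ominus P_2) = \frac{(a_1 a_2 - 1)^2}{(a_1 - a_2)^2}
\quad\text{and}\quad
x(2P_1) = \frac{(a_1 - a_1^{-1})^2}{4(a_1 + \delta + a_1^{-1})},
\end{equation*}
and then specialise $(a_1, a_2) = (x_k, x_n)$ in the first and $a_1 = x_n$ in the second.

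For the product identity, the slopes of $\overline{P_1 P_2}$ and $\overline{P_1(\ominus P_2)}$ are $m = (b_2 - b_1)/(a_2 - a_1)$ and $m' = (b_1 + b_2)/(a_1 - a_2)$, so by case 4 of the addition law recalled in the paper, $x(P_1 \oplus P_2) = (b_1 - a_1 m)^2/(a_1 a_2)$ and $x(P_1 \ominus P_2) = (b_1 - a_1 m')^2/(a_1 a_2)$. Their product equals the square of $(b_1 - a_1 m)(b_1 - a_1 m')/(a_1 a_2)$. Expanding $(b_1 - a_1 m)(b_1 - a_1 m') = b_1^2 - a_1 b_1(m+m') + a_1^2 m m'$ using $m + m' = 2b_1/(a_1 - a_2)$ and $m m' = (b_1^2 - b_2^2)/(a_1 - a_2)^2$, substituting $b_i^2 = f(\delta, a_i)$, and using $(a_1 - a_2)^2 - 2 a_1(a_1 - a_2) + a_1^2 = a_2^2$, one finds
\begin{equation*}
(b_1 - a_1 m)(b_1 - a_1 m') = \frac{a_2^2 f(\delta, a_1) - a_1^2 f(\delta, a_2)}{(a_1 - a_2)^2} = \frac{a_1 a_2 (a_1 a_2 - 1)}{a_1 - a_2},
\end{equation*}
the last step using $f(\delta, X) = X(X^2 + \delta X + 1)$ to factor out $a_1 a_2$ and then $(a_1 - a_2)$. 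Squaring and dividing by $(a_1 a_2)^2$ yields \eqref{product}.

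For the doubling identity, case 4 with $P_1 = P_2$ and tangent slope $m = (3a_1^2 + 2\delta a_1 + 1)/(2 b_1)$ gives $x(2 P_1) = (b_1 - a_1 m)^2 / a_1^2$. Using $b_1^2 = f(\delta, a_1)$, a direct computation gives
\begin{equation*}
b_1 - a_1 m = \frac{2 f(\delta, a_1) - a_1(3 a_1^2 + 2\delta a_1 + 1)}{2 b_1} = \frac{-a_1(a_1^2 - 1)}{2 b_1};
\end{equation*}
squaring produces $x(2 P_1) = (a_1^2 - 1)^2/(4 f(\delta, a_1))$, and dividing numerator and denominator by $a_1^2$ gives the right-hand side of \eqref{x2}.

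Both identities just derived are equalities of rational functions in $(\delta, z)$ valid for $\delta \neq \pm 2$, so their specialisations at $\delta = -2$ are again valid rational-function identities in $z$, giving \eqref{product} and \eqref{x2} with $x_n, x_{n \pm k}, x_{2n}$ replaced by their tildes; alternatively, the same derivation applies verbatim on $\tilde E \setminus \{(1, 0)\}$ via the group law of Lemma \ref{lawtransfer}. The main obstacle is the algebraic simplification $a_2^2 f(\delta, a_1) - a_1^2 f(\delta, a_2) = a_1 a_2 (a_1 a_2 - 1)(a_1 - a_2)$, which collapses cleanly only because the cubic $f(\delta, X) = X^3 + \delta X^2 + X$ has constant term $0$ and coefficient of $X$ equal to $1$; this is precisely where the specific form of the Manin-Denef cubic is essential.
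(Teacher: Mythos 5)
Your proof is correct and follows essentially the same route as the paper: both apply case 4 of the explicit addition law to $P_1\oplus P_2$ and $P_1\ominus P_2$, multiply, and eliminate the $y$-coordinates via $b_i^2=f(\delta,a_i)$ (you organize the algebra through $m+m'$ and $mm'$ where the paper uses a difference of squares, a cosmetic difference), and both handle the tilde case by noting the group law on $\tilde E$ is given by the same formulas. You additionally carry out the doubling formula, which the paper leaves to the reader.
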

\begin{proof}
We show that the first equation holds and leave the second one to the reader. By the Addition Law on the curve $E_\delta$, for any $k,n$ satisfying the hypothesis, we have (the fourth equality comes from Equation \eqref{MD}) --- here we write $f$ for $f(\delta,z)$: 
$$
\begin{aligned}
x_{n+k}\cdot x_{n-k}&=\frac{f\cdot (x_ny_k-x_ky_n)^2}{x_kx_n(x_n-x_k)^2}
\cdot \frac{f\cdot (x_ny_k+x_ky_n)^2}{x_kx_n(x_n-x_k)^2}\\
&=\frac{f^2\cdot (x_n^2y_k^2-x_k^2y_n^2)^2}{x_k^2x_n^2(x_n-x_k)^4}\\
&= \frac{x_k^2 x_n^2\cdot \left(\frac{f\cdot  y_k^2}{x_k^2}-\frac{f\cdot  y_n^2}{x_n^2}\right)^2}{(x_n-x_k)^4}\\
&=\frac{x_k^2\cdot x_n^2\cdot \left((x_k+\delta +\frac{1}{x_k})-(x_n+\delta +\frac{1}{x_n})\right)^2}{(x_n-x_k)^4}\\
&= \frac{x_k^2\cdot x_n^2\cdot \left(x_n-x_k-\left(\frac{1}{x_k}-\frac{1}{x_n}\right)\right)^2}{(x_n-x_k)^4}\\
&= \frac{x_k^2\cdot x_n^2\cdot \left(1-\frac{1}{x_k\cdot x_n}\right)^2}{(x_n-x_k)^2}\\
&= \frac{(x_kx_n-1)^2}{(x_n-x_k)^2}.
\end{aligned}
$$
The formula for $\tilde x_n$ is proven in exactly the same way (since the group law is given by the same formulas). 
\end{proof}

\begin{lemma}\label{quotienttilde}
For each $n\ge 1$, the quotient 
$$
e_n=\frac{x_n}{\tilde x_n}
$$ 
can be written as a power series in $\delta +2$ (so with coefficients in $\Q (z)$ and non-negative exponents) with constant term equal to $1$. 
\end{lemma}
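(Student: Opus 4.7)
I propose induction on $n\ge 1$, strengthening the statement to: $x_n\in \Q(z)[[\epsilon]]$, where $\epsilon = \delta+2$, with non-vanishing constant term $\tilde x_n\in \Q(z)$. The conclusion for $e_n = x_n/\tilde x_n$ is then immediate, since $\tilde x_n$ is a unit of $\Q(z)[[\epsilon]]$ and the quotient has constant term $1$.

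For the base cases, $n=1$ is trivial, since $x_1=z$. For $n=2$, the doubling formula on $E_\delta$ gives
$$x_2 \;=\; \frac{(z^2-1)^2}{4(z^3+\delta z^2+z)} \;=\; \frac{(z^2-1)^2}{4z(z^2+\delta z+1)},$$
and the substitution $\delta=\epsilon-2$ rewrites $z^2+\delta z+1$ as $(z-1)^2+\epsilon z$. Hence
$$e_2 \;=\; \frac{(z-1)^2}{(z-1)^2+\epsilon z} \;=\; \sum_{k\ge 0}(-1)^k\Bigl(\frac{z}{(z-1)^2}\Bigr)^k\epsilon^k,$$
which manifestly has the required form.

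For the inductive step, I would apply Lemma \ref{productformula} with $k=1$, obtaining
$$x_{n+1} \;=\; \frac{(zx_n-1)^2}{(x_n-z)^2\, x_{n-1}}.$$
Given the induction hypothesis for $x_{n-1}$ and $x_n$, the factor $x_{n-1}$ is a unit of $\Q(z)[[\epsilon]]$, and the right-hand side defines an element of $\Q(z)[[\epsilon]]$ whose constant term is precisely $\tilde x_{n+1}$, \emph{provided} that $zx_n-1$ and $x_n-z$ are also units in $\Q(z)[[\epsilon]]$ --- equivalently, that their constant terms $z\tilde x_n-1$ and $\tilde x_n-z$ are non-zero in $\Q(z)$.

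The only non-routine step is thus to verify, for every $n\ge 2$, the non-identities $\tilde x_n\not\equiv z$ and $\tilde x_n\not\equiv 1/z$ in $\Q(z)$. I would deduce both through the uniformisation. By Lemma \ref{lawtransfer}, the group law on $\tilde E\setminus\{(1,0)\}$ pulls back through $(\twp,\twp')$ to addition on $\C$, so $\tilde x_n(\twp(u))=\twp(nu)$ as meromorphic functions of $u$. An identity $\tilde x_n\equiv z$ would force $\twp(nu)\equiv \twp(u)$; but $\twp(nu)$ has $2\pi i/n$ as a period, while a direct computation from the explicit formula for $\twp$ shows that the fundamental period of $\twp$ is $\tilde\omega=2\pi i$, contradicting $n\ge 2$. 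For the second identity, the computation $\twp(u+i\pi)=1/\twp(u)$ (immediate from the definition of $\twp$) reduces $\tilde x_n\equiv 1/z$ to $\twp(nu)\equiv \twp(u+i\pi)$, which is ruled out by the same period comparison. The delicate point to keep in mind here is that the non-vanishing must be established in $\Q(z)$ rather than at an individual value of $z$, so passing through the uniformisation (which produces a functional identity in $u$) is what makes the argument clean.
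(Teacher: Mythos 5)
Your argument is correct and is essentially the proof in the paper: both proceed by induction using Lemma \ref{productformula} with $k=1$, the explicit base cases $e_1=1$ and $e_2$, and the non-vanishing in $\Q(z)$ of $z\tilde x_n-1$ and $\tilde x_n-z$ (equivalently, of the constant terms of the relevant numerators and denominators). The only divergence is in how that non-vanishing is justified: the paper simply observes that for $n\ge 2$ the rational function $\tilde x_n$ has degree $n$, hence cannot equal $z$ or $1/z$, whereas you pass through the uniformisation $\tilde x_n\circ\twp(u)=\twp(nu)$ and compare periods --- a sound but heavier route to the same fact.
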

\begin{proof}
We prove it by induction on $n$. Assume $n\ge 2$ (and observe that $x_{-n}=x_n$). By Equation \eqref{product} (taking $k=1$), we have
$$
\tilde x_{n+1}\tilde x_{n-1}e_{n+1} e_{n-1}=\left(\frac{z\tilde x_ne_n-1}{e_n\tilde x_n-z}\right)^2
=\left(\frac{ze_n-\tilde x_n^{-1}}{e_n-z\tilde x_n^{-1}}\right)^2
$$
hence, by Equation \eqref{product} for $\tilde x_{n+1}\tilde x_{n-1}$, 
$$
\left(\frac{z\tilde x_n-1}{\tilde x_n-z}\right)^2e_{n+1} e_{n-1}=\left(\frac{ze_n-\tilde x_n^{-1}}{e_n-z\tilde x_n^{-1}}\right)^2
$$
hence (doing a cross product)
$$
\left(\frac{e_n-z\tilde x_n^{-1}}{\tilde x_n-z}\right)^2e_{n+1} e_{n-1}=\left(\frac{ze_n-\tilde x_n^{-1}}{z\tilde x_n-1}\right)^2
$$
and finally
$$
\left(\frac{e_n-z\tilde x_n^{-1}}{1-z\tilde x_n^{-1}}\right)^2e_{n+1} e_{n-1}=\left(\frac{ze_n-\tilde x_n^{-1}}{z-\tilde x_n^{-1}}\right)^2.
$$
Since $1-z\tilde x_n^{-1}$ and $z-\tilde x_n^{-1}$ do not vanish as functions of $z$ (because for $n\ge 2$, the rational functions $\tilde x_n$ have degree $n$), if $e_n$ is a power series in $\delta+2$ with coefficients in $\Q(z)$ and with constant term $1$, then also $e_{n-1}e_{n+1}$ has this property. The claim follows by induction because $e_1=1$ and $e_2$ do have the property. Indeed, we have 
$$
\tilde x_2=\frac{(z+1)^2}{4z}
$$
and 
$$
x_{2}=\frac{(z^2-1)^2}{4\cdot (z^3+\delta z^2+z)}
$$
so 
$$
\begin{aligned}
\frac{x_2}{\tilde x_2}&=\frac{4z(z^2-1)^2}{4(z^3+\delta z^2+z)(z+1)^2}\\
&=\frac{(z-1)^2}{z^2+\delta z+1}\\
&=\frac{(z-1)^2}{1+(\delta +2)z-2z+z^2}\\
&=\frac{(z-1)^2}{(z-1)^2+z(\delta +2)}\\
&=\frac{1}{1+\frac{z}{(z-1)^2}(\delta +2)}.
\end{aligned}
$$
\end{proof}

\begin{lemma}\label{Poly}
For any integer $n\ne0$, the quantity $\delta+2$ is not a zero or a pole of $y_n$. 
\end{lemma}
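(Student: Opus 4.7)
The plan is to view $y_n$ inside the discrete valuation ring $\Q(z)[[u]]$, where $u=\delta+2$, and to show that $y_n$ is a unit there---equivalently, that the $u$-adic valuation of $y_n$, regarded as an element of $\Q(z)((u))$, is exactly zero. This is the standard translation of ``$\delta+2$ is neither a zero nor a pole of $y_n$'' into the $(\delta+2)$-adic completion.

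For the ``no pole'' half, I would invoke Lemma \ref{quotienttilde} to write $x_n=\tilde x_n\cdot e_n$ with $e_n\in\Q(z)[[u]]$ having constant term $1$; in particular $x_n$ itself lies in $\Q(z)[[u]]$. Since $\partial/\partial z$ acts coefficient-wise (the variables $z$ and $u$ are independent) and preserves this ring, $\partial x_n/\partial z$ also lies in $\Q(z)[[u]]$. The identity $\partial x_n/\partial z=ny_n$ of Lemma \ref{wellknown}, which holds as an equality of rational functions in $\Q(\delta,z)$ and therefore inside $\Q(z)((u))$, then forces $y_n\in\Q(z)[[u]]$. Thus $\delta+2$ cannot appear in the denominator of $y_n$.

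For the ``no zero'' half, I need to show that the reduction of $y_n$ modulo $u$ is a nonzero element of $\Q(z)$. Reducing $\partial x_n/\partial z=ny_n$ modulo $u$ gives $\partial\tilde x_n/\partial z=n\tilde y_n$, so the task reduces to showing that $\tilde x_n$ is not constant as a rational function of $z$. Here the parametrization of Section \ref{secparam} is the key: for $z=\twp(u_0)$ with $u_0\in\C\setminus\tilde\Lambda$, iterating Lemma \ref{lawtransfer} gives $n\cdot(\twp(u_0),\twp'(u_0))=(\twp(nu_0),\twp'(nu_0))$, and because the group-law formulas defining $\tilde x_n$ on $\tilde E\setminus\{(1,0)\}$ coincide with those of $\oplus$, we conclude $\tilde x_n(\twp(u_0))=\twp(nu_0)$. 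Since $u_0\mapsto\twp(nu_0)$ is visibly non-constant for $n\neq 0$ (it inherits the poles of $\twp$, rescaled to $\tilde\Lambda/n$), $\tilde x_n$ is not constant in $z$, so $\tilde y_n\neq 0$.

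Combining the two halves, $y_n$ has $u$-adic valuation exactly zero, so $\delta+2$ is neither a zero nor a pole of $y_n$ in $\Q(\delta,z)$. I do not expect any step to be a serious obstacle, since Lemmas \ref{wellknown}, \ref{quotienttilde} and \ref{lawtransfer} already provide all the heavy lifting; the only mildly delicate point is the passage between ``valuation in the completion $\Q(z)((u))$'' and ``divisibility by $\delta+2$ in lowest terms'' of the original rational function, but this is the standard equivalence for the height-one prime $(\delta+2)$.
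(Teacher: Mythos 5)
Your proof is correct, and it differs from the paper's in the mechanism used to transfer information from $x_n$ to $y_n$. The paper argues through the curve equation: if $\delta+2$ were a zero or pole of $y_n$, then since $v_{\delta+2}(f(\delta,z))=0$, Equation \eqref{MD} forces $v_{\delta+2}(f(\delta,x_n))=2v_{\delta+2}(y_n)\ne0$, which is then played off against Lemma \ref{quotienttilde}; this is terser but leaves implicit the case $v_{\delta+2}(x_n)=0$ with $v_{\delta+2}(x_n^2+\delta x_n+1)>0$, i.e.\ $\tilde x_n\equiv 1$, which must also be excluded by the nonconstancy of $\tilde x_n$. You instead transfer through the derivative identity $\partial x_n/\partial z=ny_n$ of Lemma \ref{wellknown}: membership of $x_n$ in $\Q(z)[[\delta+2]]$ (from Lemma \ref{quotienttilde}) immediately gives the same for $y_n$, and the nonvanishing of the reduction comes directly from $\partial\tilde x_n/\partial z\ne0$. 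Both routes rest on Lemma \ref{quotienttilde} as the essential input; yours handles the ``no zero'' half more transparently, at the cost of invoking Lemma \ref{wellknown} and the (standard but worth stating) compatibility of $\partial/\partial z$ with the $(\delta+2)$-adic expansion. Two small housekeeping points: Lemma \ref{quotienttilde} is stated for $n\ge1$, so you should reduce $n<0$ to $n>0$ via $x_{-n}=x_n$, $y_{-n}=-y_n$; and the nonconstancy of $\tilde x_n$ can be read off more cheaply from the degree count already recorded in the proof of Lemma \ref{quotienttilde}, without redoing the uniformisation argument.
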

\begin{proof}
If $\delta+2$ were a zero or a pole of $y_n$, then it would be a zero or a pole of $x_n$ as well (by Equation \eqref{MD}), but this would contradict Lemma \ref{quotienttilde} because $\tilde x_n$ is a non-constant function of $z$ only. 
\end{proof}

We are now able to prove our main theorem about the endomorphism maps $x_n(z,\delta)$ and $y_n(z,\delta)$. 

\begin{corollary}\label{Sofia}
For any odd integer $n$, $\left. \alpha_{x_ny_n}\right|_{z=1,\delta=-2}$ is defined, and we have
$$
\left. \alpha_{x_ny_n}\right|_{z=1,\delta =-2}=\left. A_{x_ny_n}\right|_{z=1,\delta =-2}=n.
$$ 
\end{corollary}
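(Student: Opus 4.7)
The plan is to apply Lemma \ref{alphaA} to the pair $(x,y)=(x_n,y_n)$. Since Lemma \ref{Liz} already provides the identity $A_{x_ny_n}=n$ in all of $\M$, the value $\left.A_{x_ny_n}\right|_{z=1,\delta=-2}=n$ will come for free, and only two things will remain to verify: that $\left.\alpha_{x_ny_n}\right|_{z=1,\delta=-2}$ is defined at all, and that $\ord_{z-1}(x_n-1)=1$, so that the multiplicative factor in Equation \eqref{eqlemalphaA1} collapses to $1$.

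The whole argument will pivot on a rigidity observation along the line $z=1$. From Lemma \ref{groupLaw}, for every odd $n$ and every $\delta\notin\{-2,2\}$, $z-1$ is a simple zero of $x_n-1$, so $x_n(1,\delta)=1$; since $x_n(1,\delta)$ is a rational function of $\delta$, this identity propagates to every $\delta$, and $x_n(1,\delta)\equiv 1$. Substituting $z=1$ and $x_n=1$ into \eqref{MD} gives $(\delta+2)\,y_n(1,\delta)^2=\delta+2$, hence $y_n(1,\delta)^2\equiv 1$ as a rational function of $\delta$; a rational function whose square is $1$ must be constant, so $y_n(1,\delta)=\epsilon_n$ for some fixed sign $\epsilon_n\in\{-1,+1\}$.

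Next I will expand the relevant quantities as power series in $z-1$ with coefficients in $\C(\delta)$. By Lemma \ref{wellknown}, extended to $\delta=-2$ by Lemma \ref{Liz}, the coefficient of $(z-1)$ in $x_n-1$ is $\left.\partial x_n/\partial z\right|_{z=1}=n\,y_n(1,\delta)=n\epsilon_n\ne 0$, which simultaneously shows $\ord_{z-1}(x_n-1)=1$ and gives
$$
\alpha_{x_ny_n}=\frac{x_n-1}{(z-1)y_n}=\frac{n\epsilon_n(z-1)+O((z-1)^2)}{\epsilon_n(z-1)+O((z-1)^2)}=n+O(z-1).
$$
So $\alpha_{x_ny_n}$ is regular at $z=1$; its evaluation there is the constant rational function $n\in\C(\delta)$, which is trivially defined at $\delta=-2$ with value $n$. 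Plugging both pieces into Lemma \ref{alphaA} then yields $\left.\alpha_{x_ny_n}\right|_{z=1,\delta=-2}\cdot 1 = \left.A_{x_ny_n}\right|_{z=1,\delta=-2}=n$, as required.

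The main obstacle I foresee is interpretational rather than computational: as a small example like $\tilde x_3-1=(z-1)^3/(3z+1)^2$ already suggests, both $x_n$ and $y_n$ typically have honest indeterminacies at the point $(1,-2)$, so the value of $\alpha_{x_ny_n}$ obtained by specializing $\delta=-2$ first genuinely differs from the value obtained by specializing $z=1$ first. One must therefore commit to the convention implicit in the proof of Lemma \ref{alphaA}, namely, expand in $z-1$ first and only then set $\delta=-2$; the rigidity of $y_n$ along $z=1$ established above is exactly what forces this convention to produce the clean answer $n$.
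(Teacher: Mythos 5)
Your proof is correct, and its skeleton is the paper's: both arguments funnel through Lemma \ref{alphaA}, Lemma \ref{groupLaw} and Lemma \ref{Liz}, with the convention that $\left.\cdot\right|_{z=1,\delta=-2}$ means evaluation at $z=1$ first. The one genuine divergence is in how definedness of $\left.\alpha_{x_ny_n}\right|_{z=1,\delta=-2}$ is obtained. The paper derives it from Lemma \ref{groupLaw} together with Lemma \ref{Poly} (that $\delta+2$ is neither a zero nor a pole of $y_n$), and Lemma \ref{Poly} in turn rests on the product-formula machinery of Lemmas \ref{productformula} and \ref{quotienttilde}. You instead note that for odd $n$ the restriction of $x_n$ to the line $z=1$ is identically $1$, deduce from \eqref{MD} that $y_n(1,\delta)^2\equiv 1$ and hence $y_n(1,\delta)=\epsilon_n\in\{\pm1\}$ is constant, and read off from the $(z-1)$-expansion (via $\partial x_n/\partial z=ny_n$) that $\alpha_{x_ny_n}$ restricts to the constant $n$ along $z=1$; this simultaneously gives $\ord_{z-1}(x_n-1)=1$. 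That route is more elementary and, for this corollary, bypasses Lemma \ref{Poly} entirely (the second equality still comes from Lemma \ref{Liz}, which needs only the analyticity of $A_{x_ny_n}$). Your closing caveat about the two specializations not commuting is well taken and your test case is accurate: indeed $\tilde x_3-1=(z-1)^3/(3z+1)^2$, so the order at $z-1$ jumps from $1$ to $3$ if one sets $\delta=-2$ first, which is exactly why the order of evaluation must be fixed as you (and the paper) fix it.
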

\begin{proof}
The first assertion follows immediately from Lemmas \ref{groupLaw} and \ref{Poly}. The first equality follows by combining Lemmas \ref{alphaA} and \ref{groupLaw}, whereas the second equality is Lemma \ref{Liz}. 
\end{proof}

\section{Logical consequences}\label{Logic}

We prove Theorem \ref{mainLogic}. From Lemma \ref{alphaA}, Lemma \ref{intvallem} and Corollary \ref{Sofia}, it follows: 
\begin{lemma}\label{integral}
As $(x,y)$, with $y\ne0$, ranges over the set of solutions of Equation \eqref{MD} over $\M=\M_{z,\delta}$, the set of finite values of $2\alpha_{xy}$ is contained in $\Z$ and contains $4\Z+2$.
\end{lemma}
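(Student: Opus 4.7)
The proof assembles Lemmas \ref{alphaA}, \ref{intvallem} and Corollary \ref{Sofia} into the two required set inclusions.

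For the inclusion $\{\text{finite values of }2\alpha_{xy}\}\subseteq \Z$, fix a solution $(x,y)$ of \eqref{MD} with $y\ne 0$ for which $v:=\left.2\alpha_{xy}\right|_{z=1,\delta=-2}$ is a finite value. The plan is to transfer the integrality question from $\alpha_{xy}$ to $A_{xy}$ by means of Lemma \ref{alphaA}. That lemma forces $\ord_{z-1}(x-1)\ne 0$ (otherwise $\left.\alpha_{xy}\right|_{z=1,\delta=-2}$ would be undefined), and it yields $v=0\in\Z$ outright whenever $\ord_{z-1}(x-1)<-2$ or $\ord_{z-1}(x-1)>1$. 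In the three remaining cases $\ord_{z-1}(x-1)\in\{-2,-1,1\}$, Equation \eqref{eqlemalphaA1} gives
$$
v\;=\;\frac{2}{\ord_{z-1}(x-1)}\cdot\left.A_{xy}\right|_{z=1,\delta=-2},
$$
so $v$ is one of $\pm\left.A_{xy}\right|_{z=1,\delta=-2}$ or $\pm 2\left.A_{xy}\right|_{z=1,\delta=-2}$. The rational integrality of $\left.A_{xy}\right|_{z=1,\delta=-2}$ is then supplied by Lemma \ref{intvallem}, whence $v\in\Z$.

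For the reverse inclusion $4\Z+2\subseteq \{\text{finite values of }2\alpha_{xy}\}$, I will exhibit the required solutions using the addition law on $E_\delta$. Starting from $(z,1)$ and iterating $n$ times on the curve produces a solution $(x_n,y_n)$ of \eqref{MD} over $\C(z,\delta)$ for each nonzero integer $n$. Corollary \ref{Sofia} asserts that $\left.\alpha_{x_ny_n}\right|_{z=1,\delta=-2}$ is defined and equals $n$ for every odd $n$. Hence $\left.2\alpha_{x_ny_n}\right|_{z=1,\delta=-2}=2n$, and as $n$ runs through the odd integers the values $2n$ exhaust $4\Z+2$.

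The main technical obstacle is reconciling the generality of Lemma \ref{integral} with the hypothesis in Lemma \ref{intvallem} that $\delta+2$ be neither a pole nor a zero of $y$. I expect this hypothesis to be automatic whenever $v$ is finite and nonzero: for example, when $\ord_{z-1}(x-1)>0$ one has $x|_{z=1}\equiv 1$, and Equation \eqref{MD} evaluated at $z=1$ simplifies to $(\delta+2)\,y|_{z=1}^2=\delta+2$, so $y|_{z=1}=\pm 1$; in particular $y$ cannot vanish along $\delta=-2$. A parallel tracking of pole orders via \eqref{MD} should dispatch the cases $\ord_{z-1}(x-1)\in\{-1,-2\}$, after which Lemma \ref{intvallem} applies without change.
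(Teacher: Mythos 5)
Your overall strategy is the same as the paper's: Lemma \ref{integral} is assembled from Lemma \ref{alphaA}, Lemma \ref{intvallem} and Corollary \ref{Sofia}, and your treatment of the containment $4\Z+2\subseteq\{\text{finite values of }2\alpha_{xy}\}$ via the solutions $(x_n,y_n)$ with $n$ odd is exactly right. (A minor remark: your case $\ord_{z-1}(x-1)=-1$ is vacuous, since for negative order Equation \eqref{MD} forces $3\ord_{z-1}(x-1)=2\ord_{z-1}(y)$, so that order must be even.)

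The genuine gap is the one you flag yourself, and your proposed repair does not work. Lemma \ref{intvallem} requires that $\delta+2$ be neither a zero nor a pole of $y$, and this hypothesis is \emph{not} automatic. From $\ord_{z-1}(x-1)>0$ you correctly deduce $x|_{z=1}\equiv 1$ and $y|_{z=1}\equiv\pm1$, but for a meromorphic function of two variables the restriction to the line $z=1$ does not control $\ord_{\delta+2}(y)$: the point $(1,-2)$ may be a point of indeterminacy, and for instance $y=(\delta+2)/\bigl((\delta+2)+(z-1)h\bigr)$ vanishes identically along $\delta=-2$ while restricting to the constant $1$ along $z=1$. Your sketch also says nothing about $\delta+2$ being a \emph{pole} of $y$, nor about the case $\ord_{z-1}(x-1)=-2$. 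The correct repair is not to exclude the exceptional case but to show that in it the value is $0$. Indeed, suppose $j=\ord_{\delta+2}(y)\ne0$; since $\ord_{\delta+2}(f(\delta,z))=0$, Equation \eqref{MD} gives $\ord_{\delta+2}(f(\delta,x))=2j$. If $j<0$ this forces $\ord_{\delta+2}(x)=2j/3$, and Lemma \ref{Alla} (with $\rho=\delta+2$, which divides $\rho_z=0$) gives $\ord_{\delta+2}(x_z)\ge 2j/3>j$. If $j>0$, writing $f(\delta,x)=x\bigl((x-1)^2+(\delta+2)x\bigr)$ shows that $\tilde x\equiv 1$ is impossible (it would force $2j=1$) and that $\tilde x\equiv 0$ with $\ord_{\delta+2}(x)=2j$, whence $\ord_{\delta+2}(x_z)\ge 2j>j$. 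In all cases $\ord_{\delta+2}(A_{xy})>0$, so $\tilde A_{xy}\equiv 0$ and $\left.A_{xy}\right|_{z=1,\delta=-2}=0$ (recall $A_{xy}$ is analytic by Lemma \ref{xavier1}); Equation \eqref{eqlemalphaA1} then yields $\left.2\alpha_{xy}\right|_{z=1,\delta=-2}=0\in\Z$ whenever it is defined. With this supplement your first inclusion goes through. Note that the paper itself states the lemma as an immediate consequence of the three cited results and does not spell out this verification either.
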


Consider  the set $S$ defined as follows: $n\in S$ if and only if 
\begin{multline}\notag
n\in \C \wedge \exists a,b,x,y,v\in \M\\
(z+\delta z^2+z) b^2=a^3+\delta a^2+a\\
\wedge y\ne 0\wedge  
(x,y)=2(a,b)\oplus (z,1)\wedge 2(x-1)=(z-1) y v\wedge \eval (v-n).
\end{multline}
It follows from Lemma \ref{integral} that $S$ is contained in the set of rational integers and contains $4\Z+2$. Indeed, if $n=2(2k+1)\in 4\Z+2$, then we may choose $(a,b)=(x_k,y_k)$, $(x,y)=(x_{2k+1},y_{2k+1})$, and $v=2\alpha_{xy}$. Conversely, according to Lemma \ref{alphaA} combined with Lemma \ref{intvallem}, any $(x,y)$ as in the formula has the property $\left. 2\alpha_{xy}\right|_{z=1,\delta=2}\in\Z$ (by definition of $\eval$).

Hence $S\cup (S+1)\cup (S+2)\cup (S+3)=\Z$ and $\Z$ has a diophantine definition in $\M$ over our language. The similar definition for $\H$ results from the one for $\M$, by substituting each variable by a pair of variables, a ``numerator'' and a ``denominator'', by declaring that the denominators are not equal to $0$, and by clearing denominators in the resulting relations. This proves Theorem \ref{mainLogic} for $m=2$ variables. The general case (for any number of variables) is a trivial consequence of this. 

In the case $B=\C^m$, the condition $n\in \C$ may be substituted by $\exists c\  c^2=n^5-1$ (since non-singular curves of genus $\geq 2$ do not admit non-constant global meromorphic parametrisations --- for a proof, see for instance \cite[last section]{Pheidas95}).  

Moreover, over any field the statement $u\ne 0$ may be replaced by $\exists v\ u\cdot v=1$, while over $\H_{\bar z}(\C ^m)$ it can be substituted by 
$$
\exists \rho ,\tau \in \C(\tau |1\wedge z_1-\rho |u-\tau) ,
$$
where $|$ means ``divides'' and may be substituted as follows: $v_1|v_2$ if and only if $\exists v_3\ v_2=v_1\cdot v_3$.


Thanases Pheidas\\
Department of Mathematics\\
University of Crete-Heraklion, Greece\\

Xavier Vidaux\\
Departamento de Matem\'atica\\
Universidad de Concepci\'on\\
Avenida Esteban Iturra s/n\\
Concepci\'on, Chile\\

\end{document}